\documentclass[
framed_theorems,
framed_proofs,
print,
]{OCPreprint}

\usepackage{enumitem}
\setlist[enumerate,1]{label=\textup{(\roman*)}}

\let\cite\parencite

\newcommand\cnv{\mathbin{*}}

\newcommand\pushforward{\mathbin{\#}}

\title[Second-order conditions for bang-bang control]%
{Second-order conditions for bang-bang control of elliptic equations in arbitrary dimensions}
\author[Wachsmuth]{%
	Gerd Wachsmuth%
	\footnote{%
		Brandenburgische Technische Universität Cottbus--Senftenberg,
		Institute of Mathematics,
		03046 Cottbus,
		Germany,
		\email{gerd.wachsmuth@b-tu.de},
		\url{https://www.b-tu.de/fg-optimale-steuerung},
		ORCID: \href{https://orcid.org/0000-0002-3098-1503}{0000-0002-3098-1503}%
	}
}

\begin{document}
\maketitle

\begin{abstract}
	We consider an optimal control problem governed by a semilinear PDE
	in cases where the optimal control is of bang-bang type.
	By utilizing the theory of Bessel potential space,
	we characterize quadratic growth of the objective
	via a second-order optimality condition.
	In contrast to previous contributions,
	our method of proof works in arbitrary spatial dimensions.
\end{abstract}

\begin{keywords}
	bang-bang control,
	second-order optimality condition,
	Bessel potential space,
	quadratic growth
\end{keywords}

\begin{msc}
	\mscLink{49K20},
	\mscLink{49K30},
	\mscLink{49J52}
\end{msc}

\section{Introduction}
\label{sec:intro}
We consider optimal control problems of the form
\begin{equation*}
	\tag{P}
	\label{eq:P}
	\text{Minimize} \qquad F(u) \qquad
	\text{w.r.t.} \qquad u \in \Uad \coloneqq \set{v \in L^\infty(\Omega) \given \abs{v} \le 1}
	,
\end{equation*}
where $\Omega \subset \R^d$ is open and bounded.
We assume that the objective $F$ has certain smoothing properties
which, in particular, exclude that $F$ contains regularization terms like $\norm{u}_{L^2(\Omega)}^2$.
The example which we have in mind is
\begin{equation}
	\label{eq:obj}
	F(u) \coloneqq \frac12\norm{ S(u) - y_d }_{L^2(\Omega)}^2,
\end{equation}
where $S$ is the solution operator of a semilinear and elliptic PDE,
see \cref{subsec:optimal_control}.

It is known that solutions to \eqref{eq:P} could be of bang-bang type,
i.e., they only attain values at the control bounds $\set{-1, 1}$.
Our goal is to investigate
second-order optimality conditions
at a local minimizer $\bar u$
given that the adjoint state $\bar\varphi \coloneqq \nabla F(u)$
is continuously differentiable.
We want to derive both, necessary and sufficient conditions,
and we want to characterize quadratic growth of $F$ w.r.t.\ the control in the neighborhood of $\bar u$.

We mention the relevant literature which addresses second-order optimality condition
for optimal control problems governed by PDEs
with bang-bang controls.
The first contribution is \cite{Casas2012:1}.
In this work, a sufficient condition for quadratic growth
w.r.t.\ the state variable in $L^2(\Omega)$
is obtained.
This does not need any structural assumptions
but the obtained growth is rather weak and no necessary conditions are available.
By using structural conditions on the adjoint state $\bar\varphi$
(basically $\nabla\bar\varphi \ne 0$ on the set $\set{\bar\varphi = 0}$),
\cite{ChristofWachsmuth2017:1,WachsmuthWachsmuth2021}
prove that quadratic growth in $L^1(\Omega)$ is equivalent to the second-order condition
\begin{equation}
	\label{eq:ssc_bang_bang_intro}
	F''(\bar u) h^2
	+
	\int_{\set{\bar\varphi = 0} \cap \Omega} \frac{\abs{\nabla \bar\varphi}}{2} h^2 \d\HH^{d-1}
	>
	0
	\qquad
	\forall h \in L^2(\HH^{d-1}|_{\set{\bar\varphi = 0} \cap \Omega} ) \setminus \set{0}
	.
\end{equation}
The main tool in the proof are the so-called weak-$\star$ second subderivatives.
This analysis, however, is restricted to dimensions $d \le 3$,
since it requires that the linearized control-to-state map
is bounded from $L^1(\Omega)$ to $L^2(\Omega)$.
Finally,
\cite{WachsmuthWachsmuth2025:1}
shows the local convergence of a semismooth Newton method
if an analogue of the second-order condition
\eqref{eq:ssc_bang_bang_intro}
holds
and this result is not restricted to $d \le 3$.
This raises the question if
\eqref{eq:ssc_bang_bang_intro}
is still a sufficient optimality condition in dimensions $d > 3$.

One crucial ingredient to show the equivalence of \eqref{eq:ssc_bang_bang_intro}
with quadratic growth in $L^1(\Omega)$
is the inequality
\begin{equation}
	\label{eq:main_inequality}
	\int_\Omega \bar\varphi (u - \bar u) \d\lambda^d
	\ge
	C \norm{u - \bar u}_{L^1(\Omega)}^2
	\qquad\forall u \in \Uad
\end{equation}
with $C > 0$.
This inequality holds if
$\bar\varphi \in C^1(\bar\Omega)$
satisfies the structural assumption $\nabla\bar\varphi \ne 0$ on $\set{\bar\varphi = 0}$,
see \cite{CasasWachsmuthWachsmuth2016:1} and \cref{sec:inequalities}.
We mention that inequalities similar to \eqref{eq:main_inequality}
play a crucial role for deriving finite-element error estimates,
see, e.g.,
\cite{DeckelnickHinze2012,Jork2024,Fuica2025,FuicaJork2025}.

We show that the space
$L^1(\Omega)$ in \eqref{eq:main_inequality} can be replaced by the dual space
$L^{\alpha,p}(\R^d)\dualspace$
of a Bessel potential space
(see \cref{sec:bessel})
under appropriate assumptions on $\alpha > 0$ and $p \in (1,\infty)$,
see \cref{thm:main_inequality}.
This allows us
to replace $L^1(\Omega)$ in the second-order analysis
by the space $L_0^{\alpha,p}(\Omega)\dualspace$.
Our main result is that the second-order condition \eqref{eq:ssc_bang_bang_intro}
is equivalent to the quadratic growth of $F$
in the space $L^{\alpha,p}(\Omega)\dualspace$ at $\bar u$,
see \cref{thm:second_order_for_bang_bang}.
In particular, we no longer need the restriction $d \le 3$.

The outline of the paper is as follows.
In \cref{sec:bessel},
we briefly introduce Bessel potential spaces
and study the norm of measures in dual Bessel potential spaces.
As a main result, we obtain \cref{thm:pushforward_dual},
which shows that the dual norm of the pushforward measure $T \pushforward \mu$
is bounded from below by the dual norm of $\mu$ itself
if $T$ is Lipschitz continuous.
These results are used in \cref{sec:inequalities}
to verify that we can replace $L^1(\Omega)$ in \eqref{eq:main_inequality}
by a dual Bessel potential space.
Finally,
\cref{sec:application}
contains the application to the optimal control problem \eqref{eq:P}
and, in particular, our main result \cref{thm:second_order_for_bang_bang}.

Throughout the paper,
we denote by $c$ and $C$ generic, positive constants
whose values can change at every occurrence.

\section{Bessel potential spaces}
\label{sec:bessel}

In this section, we recall the definition and basic properties of Bessel potential spaces,
which are a generalization of classical Sobolev spaces.
More information and further references can be found in
\cite[Chapter~V]{Stein1972},
\cite[Section~2.6]{Ziemer1989},
and
\cite[Section~1.2.4]{AdamsHedberg1996}.
Our main goal is to derive a relation between the norms of $\mu$
and its pushforward measure $T \pushforward \mu$ in dual Bessel potential spaces
if $\mu$ is a (nonnegative) measure and $T$ is Lipschitz continuous,
see \cref{thm:pushforward_dual} below.

We denote by $\FF$ the Fourier transform defined via
\begin{equation*}
	(\FF f)(\xi)
	=
	\int_{\R^d} \e^{-\mathrm{i} \xi \cdot x} \d\lambda^d(x)
	\qquad\forall \xi \in \R^d
\end{equation*}
for sufficiently nice functions $f$,
e.g., $f \in L^1(\R^d)$.
Note that this definition implies
$\FF( f \cnv g ) = \FF(f) \FF(g)$ for $f, g \in L^1(\R^d)$.
The Bessel kernel $g_\alpha \colon \R^d \to \R$, $\alpha > 0$, is defined
as the function whose Fourier transform satisfies
\begin{equation*}
	(\FF g_\alpha)(\xi) = (1 + \abs{\xi}^2)^{-\alpha/2}
	.
\end{equation*}
Note that $g_\alpha \in L^1(\R^d)$ for all $\alpha > 0$
and $g_\alpha \cnv g_\beta = g_{\alpha + \beta}$ for all $\alpha,\beta > 0$.
Further,
$\SS(\R^d) \ni \varphi \mapsto g_\alpha \cnv \varphi \in \SS(\R^d)$
is a bijection, where $\SS(\R^d)$ is the (real-valued) Schwartz space,
since
\begin{equation*}
	g_\alpha \cnv \varphi = \FF^{-1}( \FF(g_\alpha) \FF(\varphi))
\end{equation*}
and all operations on the right-hand side are bijections on the (complex-valued)
Schwartz space.
At this point, we mention that all Banach spaces that appear in this paper are real.
Complex numbers only appear in the definition of the Fourier transform
which is used to define the Bessel kernel.

Using the Bessel kernel,
we define the Bessel potential space
$L^{\alpha,p}(\R^d)$
for $\alpha > 0$ and $p \in (1,\infty)$
via
\begin{equation*}
	L^{\alpha, p}(\R^d)
	\coloneqq
	\set{ g_\alpha \cnv f \given f \in L^p(\R^d) }
\end{equation*}
and equip it with the norm
\begin{equation*}
	\norm{g_\alpha \cnv f}_{L^{\alpha,p}(\R^d)}
	\coloneqq
	\norm{f}_{L^p(\R^d)}
	.
\end{equation*}
If $\alpha$ is an integer and $p \in (1, \infty)$,
we recover the usual Sobolev spaces,
see, e.g., \cite[Theorem~1.2.3]{AdamsHedberg1996}.
\begin{theorem}
	\label{thm:bessel_sobolev}
	For all positive integers $k$ and all $p \in (1, \infty)$,
	the Bessel potential space $L^{k,p}(\R^d)$
	equals the Sobolev space $W^{k,p}(\R^d)$
	and the norms in these spaces are equivalent.
\end{theorem}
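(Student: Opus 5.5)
The plan is to reduce the statement to $L^p$-boundedness of suitable Fourier multipliers via the Mikhlin (or Marcinkiewicz) multiplier theorem, and then induct on $k$. The identification $L^{k,p}(\R^d)=W^{k,p}(\R^d)$ with equivalent norms amounts to two estimates on the dense class $\SS(\R^d)$. Here we use that $g_k\cnv\cdot$ is a bijection on $\SS(\R^d)$, and that $\SS(\R^d)$ is dense both in $L^p(\R^d)$ and in $W^{k,p}(\R^d)$. The two estimates are:
\begin{equation*}
	\norm{g_k\cnv f}_{W^{k,p}(\R^d)}\le C\norm{f}_{L^p(\R^d)}
	\quad\text{and}\quad
	\norm{f}_{L^p(\R^d)}\le C\norm{g_k\cnv f}_{W^{k,p}(\R^d)}
	\qquad\forall f\in\SS(\R^d).
\end{equation*}
Once these hold, the first extends by density, since $g_k\cnv\cdot$ is bounded on $L^p$ because $g_k\in L^1$ (Young's inequality) and $W^{k,p}$ is complete. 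This gives $L^{k,p}\subset W^{k,p}$ with a norm bound. The second, combined with closedness, gives that the range of $g_k\cnv\cdot$ on $L^p$ is closed in $W^{k,p}$ and contains $\SS$. Hence the two spaces coincide.

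For the first estimate, take a multi-index $\beta$ with $\abs{\beta}\le k$. Then
\begin{equation*}
	\partial^\beta(g_k\cnv f)=\FF^{-1}\bigl(m_\beta\,\FF f\bigr),
	\qquad
	m_\beta(\xi)=(\mathrm{i}\xi)^\beta(1+\abs{\xi}^2)^{-k/2}.
\end{equation*}
The symbol $m_\beta$ is smooth and homogeneous-like of order $\abs\beta-k\le0$. A direct computation gives $\abs{\partial^\gamma m_\beta(\xi)}\le C_\gamma\abs{\xi}^{-\abs\gamma}$ for all $\gamma$, so the Mikhlin multiplier theorem yields boundedness on $L^p$ for $1<p<\infty$.

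For the second estimate, I would try to write $1$ as a multiplier combination of the derivatives. For $k=1$, the identity
\begin{equation*}
	(1+\abs\xi^2)^{1/2}=(1+\abs\xi^2)^{-1/2}\Bigl(1+\sum_j \xi_j^2\Bigr)
\end{equation*}
gives
\begin{equation*}
	f=(1+\abs\xi^2)^{-1/2}\cdot\bigl[\FF(u)+\textstyle\sum_j(-\mathrm{i}\xi_j)\,\FF(\partial_ju)\bigr]
\end{equation*}
in the Fourier sense, where $u=g_1\cnv f$. The multipliers $(1+\abs\xi^2)^{-1/2}$ and $\xi_j(1+\abs\xi^2)^{-1/2}$ are Mikhlin multipliers. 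For general $k$, I would choose a smooth cutoff $\chi$ equal to $1$ near the origin, and write
\begin{equation*}
	(1+\abs\xi^2)^{k/2}
	=\chi(\xi)\,(1+\abs\xi^2)^{k/2}
	+\sum_j\frac{(1-\chi(\xi))\,(1+\abs\xi^2)^{k/2}\,\xi_j^{k}}{\sum_i\xi_i^{k}\xi_i^{k}}\,\xi_j^{k}.
\end{equation*}
The first term is a compactly supported smooth multiplier applied to $u$. Each term of the sum is a Mikhlin multiplier applied to $\partial_j^k u$ (up to a factor $\mathrm{i}^k$), because $\sum_i\xi_i^{2k}\ge c\abs\xi^{2k}$ away from $0$. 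This yields $\norm f_{L^p}\le C\bigl(\norm u_{L^p}+\sum_j\norm{\partial_j^ku}_{L^p}\bigr)$.

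The main obstacle is verifying the Mikhlin derivative bounds for these symbols, in particular for the quotient in the second estimate. This is routine but tedious. It follows because each symbol is a smooth function that, for large $\abs\xi$, is a product of functions homogeneous of degree $0$ and of classical symbols $(1+\abs\xi^2)^{s/2}\abs\xi^{-s}$, and near the origin it is smooth. Alternatively one could simply cite \cite[Theorem~1.2.3]{AdamsHedberg1996} or Stein's treatment \cite[Chapter~V]{Stein1972}, where the same multiplier argument is carried out.
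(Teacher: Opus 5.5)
Your proof is correct. The paper gives no proof of this statement; it only cites \cite[Theorem~1.2.3]{AdamsHedberg1996}. So your Fourier-multiplier argument is a self-contained replacement for that citation rather than a competing route.

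It is close in spirit to the classical proof in \cite[Chapter~V]{Stein1972}. There, as I recall that proof, $k=1$ is treated with the Riesz transforms and with multipliers that are Fourier transforms of finite measures, and general $k$ follows by induction. You instead invoke the Mikhlin theorem once and handle every $k$ simultaneously through the lower bound $\sum_i \xi_i^{2k} \ge d^{1-k}\abs{\xi}^{2k}$. This removes the induction at the price of a routine symbol-class verification.

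Two small points of presentation:
\begin{itemize}
\item The induction on $k$ announced in your first sentence is never actually used.
\item In the closed-range step you need the lower bound $\norm{f}_{L^p(\R^d)} \le C\norm{g_k\cnv f}_{W^{k,p}(\R^d)}$ for all $f\in L^p(\R^d)$, not only for $f\in\SS(\R^d)$. This follows from the $\SS(\R^d)$ version together with your already-extended upper bound, by density. Alternatively, argue surjectivity directly: given $w\in W^{k,p}(\R^d)$, choose $f_n\in\SS(\R^d)$ with $g_k\cnv f_n\to w$ in $W^{k,p}(\R^d)$. Then $(f_n)$ is Cauchy in $L^p(\R^d)$ by the lower bound, so it converges to some $f$. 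Young's inequality gives $g_k\cnv f_n\to g_k\cnv f$ in $L^p(\R^d)$, hence $w=g_k\cnv f$.
\end{itemize}
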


The density of $\SS(\R^d)$ in $L^p(\R^d)$
implies that $\SS(\R^d) = g_\alpha \cnv \SS(\R^d)$
is dense in $L^{\alpha, p}(\R^d)$.
Via a smooth cut-off argument, we see that
every $\varphi \in \SS(\R^d) \subset W^{k,p}(\R^d)$
can be approximated in $W^{k,p}(\R^d)$ with functions from $C_c^\infty(\R^d)$
(smooth functions with compact support)
and together with $W^{k,p}(\R^d) \embeds L^{\alpha,p}(\R^d)$ for $k \in \N$, $k \ge \alpha$,
this implies that
the space $C_c^\infty(\R^d)$
is dense in $L^{\alpha, p}(\R^d)$.

If $\Omega \subset \R^d$ is an open set,
we define
\begin{equation*}
	L_0^{\alpha,p}(\Omega)
	\coloneqq
	\cl_{L^{\alpha, p}(\R^d)} ( C_c^\infty(\Omega) )
	.
\end{equation*}

In what follows,
we need to study which measures belong to the dual space of $L^{\alpha, p}(\R^d)$.
To this end, we denote by $\MM^+(\R^d)$
the set of all Radon measures,
i.e.,
$\sigma$-additive and $[0,\infty]$-valued measures on the Borel $\sigma$-algebra
which are finite on compact sets,
outer regular on all Borel sets
and inner regular on open sets.
We say that a measure $\mu \in \MM^+(\R^d)$
belongs to $L^{\alpha, p}(\R^d)\dualspace$
if the linear functional
\begin{equation*}
	C_c^\infty(\R^d) \ni \varphi \mapsto \int_{\R^d} \varphi \d\mu \in \R
\end{equation*}
is continuous w.r.t.\ the norm of $L^{\alpha, p}(\R^d)$
or, equivalently,
if this functional can be (uniquely) extended to a functional in $L^{\alpha,p}(\R^d)\dualspace$.
Otherwise, we define $\norm{\mu}_{L^{\alpha,p}(\R^d)\dualspace} \coloneqq \infty$.
The following characterization is well known.
For convenience, we provide the proof.
\begin{lemma}
	\label{lem:dual_measures}
	Let $\alpha > 0$, $p \in (1, \infty)$ and $\mu \in \MM^+(\R^d)$ be given.
	Then, $\mu \in L^{\alpha,p}(\R^d)\dualspace$
	if and only if $g_\alpha \cnv \mu \in L^{p'}(\R^d)$,
	where $p' \in (1, \infty)$ is the conjugate exponent.
	Moreover, we have
	\begin{equation*}
		\norm{\mu}_{L^{\alpha,p}(\R^d)\dualspace}
		=
		\norm{ g_\alpha \cnv \mu }_{L^{p'}(\R^d)}
		.
	\end{equation*}
\end{lemma}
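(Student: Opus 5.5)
The plan is to rewrite the pairing of $\mu$ with a test function through the definition of $L^{\alpha,p}(\R^d)$ and Fubini, and then read off the $L^{p'}$ norm of $g_\alpha \cnv \mu$ by duality.

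The key identity is the following. Let $\varphi \in C_c^\infty(\R^d)$ and write $\varphi = g_\alpha \cnv f$ with $f = \FF^{-1}\bigl((1+\abs{\xi}^2)^{\alpha/2}\FF\varphi\bigr) \in \SS(\R^d)$. Then
\begin{equation*}
	\int_{\R^d} \varphi \d\mu
	=
	\int_{\R^d}\int_{\R^d} g_\alpha(x-y) f(y) \d\lambda^d(y) \d\mu(x)
	=
	\int_{\R^d} f \, (g_\alpha \cnv \mu) \d\lambda^d .
\end{equation*}
Here I use that $g_\alpha$ is even, which holds because its Fourier transform is real and radial. The function $g_\alpha\cnv\mu(y) = \int g_\alpha(x-y)\d\mu(x)$ is $[0,\infty]$-valued and measurable, since $g_\alpha \ge 0$ (a known property of the Bessel kernel). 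Fubini--Tonelli requires absolute integrability. I would apply Tonelli to $\abs{f}$ and note that $\int \abs{f} (g_\alpha\cnv\mu)\d\lambda^d$ is finite in both directions of the argument below.

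\emph{``If'' direction.} Assume $g_\alpha\cnv\mu\in L^{p'}(\R^d)$. Then $\abs{f}(g_\alpha\cnv\mu)$ is integrable, so the identity holds. Hölder's inequality gives
\begin{equation*}
	\abs[\Big]{\int \varphi\d\mu} \le \norm{g_\alpha\cnv\mu}_{L^{p'}(\R^d)} \norm{\varphi}_{L^{\alpha,p}(\R^d)} .
\end{equation*}
Hence $\mu \in L^{\alpha,p}(\R^d)\dualspace$ and the dual norm is at most $\norm{g_\alpha\cnv\mu}_{L^{p'}}$.

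\emph{``Only if'' direction.} This is the main obstacle: we must show integrability of $g_\alpha\cnv\mu$ before the identity is available for general $f$. I would restrict to nonnegative $f \in C_c^\infty(\R^d)$. Then $\varphi = g_\alpha\cnv f$ is nonnegative and smooth but only in $\SS$-type decay classes rather than compactly supported. Tonelli applies without any integrability assumption and gives $\int \varphi\d\mu = \int f\,(g_\alpha\cnv\mu)\d\lambda^d$, with both sides possibly $+\infty$. To bound $\int \varphi\d\mu$ for such $\varphi$, I would approximate $\varphi$ by $\chi_n\varphi$, where $0\le\chi_n\le1$ is a smooth cutoff increasing to $1$. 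Then $\chi_n\varphi\to\varphi$ in $W^{k,p}$ and hence in $L^{\alpha,p}$. By monotone convergence,
\begin{equation*}
	\int\varphi\d\mu = \lim_n \int\chi_n\varphi\d\mu \le \norm{\mu}_{L^{\alpha,p}(\R^d)\dualspace}\norm{f}_{L^p(\R^d)} .
\end{equation*}
So $\int f\,(g_\alpha\cnv\mu)\d\lambda^d \le \norm{\mu}\,\norm{f}_{L^p}$ for all nonnegative $f\in C_c^\infty(\R^d)$. By density, the same holds for all nonnegative $f \in L^p(\R^d)$, using monotone approximation together with Fatou's lemma. The converse of Hölder's inequality then yields $g_\alpha\cnv\mu\in L^{p'}(\R^d)$ with $\norm{g_\alpha\cnv\mu}_{L^{p'}}\le\norm{\mu}_{L^{\alpha,p}(\R^d)\dualspace}$. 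Concretely, I would test with truncations $f=\min(g_\alpha\cnv\mu,N)^{p'-1}\mathbf 1_{B_N}$ to avoid assuming finiteness a priori.

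Combining the two directions gives the equivalence and the equality of norms.
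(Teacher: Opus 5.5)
Your proof is correct. Your ``if'' direction is the paper's argument: write $\varphi = g_\alpha \cnv f$, then apply Fubini and H\"older.

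For the ``only if'' direction you take a genuinely different route.

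The paper applies the Riesz representation theorem to the functional $f \mapsto \langle \mu, g_\alpha \cnv f\rangle$ on $L^p(\R^d)$. This gives some $m \in L^{p'}(\R^d)$ with $\norm{m}_{L^{p'}(\R^d)} = \norm{\mu}_{L^{\alpha,p}(\R^d)\dualspace}$. The paper then identifies $m = g_\alpha \cnv \mu$ by testing against nonnegative Schwartz functions, using the same monotone cutoff argument you use to pass from $C_c^\infty(\R^d)$ to $g_\alpha \cnv f$. The norm equality then comes for free from the representation.

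You never produce an abstract representative. Instead you:
\begin{itemize}
\item bound $\int f\,(g_\alpha\cnv\mu)$ directly by $\norm{\mu}_{L^{\alpha,p}(\R^d)\dualspace}\norm{f}_{L^p(\R^d)}$ for nonnegative $f$, via Tonelli and the cutoff argument;
\item run the converse of H\"older with the truncated test functions $\min(g_\alpha\cnv\mu,N)^{p'-1}\chi_{B_N(0)}$;
\item obtain the norm equality by combining the two one-sided inequalities.
\end{itemize}
The truncation handles the possibility that $g_\alpha \cnv \mu$ is a priori infinite on a large set explicitly. The paper's final step, concluding $g_\alpha\cnv\mu = m$ a.e.\ from equality of integrals against nonnegative test functions, has to cope with this implicitly. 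Your route is slightly more elementary and self-contained: no Riesz representation and no identification of two densities. The price is a separate density step.

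One small imprecision: a nonnegative $f \in L^p(\R^d)$ generally cannot be approximated \emph{monotonically} by $C_c^\infty(\R^d)$ functions. You do not need monotonicity, though. Nonnegative mollified cutoffs that converge in $L^p$, and a.e.\ along a subsequence, combined with Fatou's lemma and $g_\alpha\cnv\mu \ge 0$, give exactly the inequality you want.
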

\begin{proof}
	Assume $g_\alpha \cnv \mu \in L^{p'}(\R^d)$.
	Any
	$\varphi \in C_c^\infty(\R^d)$
	can be written as
	$\varphi = g_\alpha \cnv f$ with $f \in L^p(\R^d)$
	and, consequently,
	$\norm{\varphi}_{L^{\alpha,p}(\R^d)} = \norm{f}_{L^p(\R^d)}$.
	Thus, Fubini's theorem
	(which is applicable due to $g_\alpha \cnv \mu \in L^{p'}(\R^d)$ and $f \in L^p(\R^d)$)
	implies
	\begin{equation*}
		\int_{\R^d} \varphi \d\mu
		=
		\int_{\R^d} g_\alpha \cnv f \d\mu
		=
		\int_{\R^d} (g_\alpha \cnv \mu) f \d\lambda^d
		\le
		\norm{g_\alpha \cnv \mu}_{L^{p'}(\R^d)}
		\norm{\varphi}_{L^{\alpha, p}(\R^d)}
		.
	\end{equation*}
	Hence, $\mu \in L^{\alpha,p}(\R^d)\dualspace$.

	For the converse direction,
	we assume $\mu \in L^{\alpha,p}(\R^d)\dualspace$.
	The functional $L^p(\R^d) \ni f \mapsto \dual{\mu}{g_\alpha \cnv f}_{L^{\alpha, p}(\R^d)}$
	is continuous and its norm is $\norm{\mu}_{L^{\alpha,p}(\R^d)\dualspace}$.
	Thus, there exists a function $m \in L^{p'}(\R^d)$ with
	$\norm{m}_{L^{p'}(\R^d)} = \norm{\mu}_{L^{\alpha,p}(\R^d)\dualspace}$
	and
	\begin{equation*}
		\int_{\R^d} m f \d\lambda^d
		=
		\dual{\mu}{g_\alpha \cnv f}_{L^{\alpha, p}(\R^d)}
		\qquad\forall f \in L^p(\R^d).
	\end{equation*}
	It remains to show that $m = g_\alpha \cnv \mu$.
	Let $\varphi \in \SS(\R^d)$ be given.
	Via a smooth cut-off argument,
	there exists a sequence $\seq{\varphi_n} \subset C_c^\infty(\R^d)$
	such that $\varphi_n \to \varphi$ monotonously and in $W^{k,p}(\R^d) \embeds L^{\alpha,p}(\R^d)$,
	with $k \in \N$, $k \ge \alpha$.
	Consequently,
	\begin{equation*}
		\dual{\mu}{\varphi}_{L^{\alpha,p}(\R^d)}
		=
		\lim_{n \to \infty} \dual{\mu}{\varphi_n}_{L^{\alpha,p}(\R^d)}
		=
		\lim_{n \to \infty} \int_{\R^d} \varphi_n \d\mu
		=
		\int_{\R^d} \varphi \d\mu,
	\end{equation*}
	where the third equality uses the monotone convergence theorem.
	In particular, every $\varphi \in \SS(\R^d)$ is $\mu$-integrable.
	This can be applied to $\varphi = g_\alpha \cnv f$
	for an arbitrary $f \in \SS(\R^d)$.
	Consequently,
	\begin{equation*}
		\int_{\R^d} m f \d\lambda^d
		=
		\dual{\mu}{g_\alpha \cnv f}_{L^{\alpha,p}(\R^d)}
		=
		\int_{\R^d} g_\alpha \cnv f \d\mu
		\qquad\forall f \in \SS(\R^d)
		,
	\end{equation*}
	where $m \in L^{p'}(\R^d)$ is as above.
	Now, we additionally suppose that $f \ge 0$
	and we can continue with Fubini's theorem
	(since all involved functions are nonnegative)
	to get
	\begin{equation*}
		\int_{\R^d} m f \d\lambda^d
		=
		\int_{\R^d} g_\alpha \cnv f \d\mu
		=
		\int_{\R^d} (g_\alpha \cnv \mu) f \d\lambda^d
		.
	\end{equation*}
	Since this holds for all $f \in \SS(\R^d)$ with $f \ge 0$,
	we have $g_\alpha \cnv \mu = m \in L^{p'}(\R^d)$.
\end{proof}

The following inequality by Wolff
will be very crucial to estimate the $L^{p'}(\R^d)$ norm of $g_\alpha \cnv \mu$.
For the proof,
we refer to
\cite[Theorem~1]{HedbergWolff1983},
\cite[Theorem~4.5.2 and (4.5.4)]{AdamsHedberg1996}
or
\cite[4.7.5~Theorem]{Ziemer1989}.
\begin{theorem}
	\label{thm:norm_in_dual_space}
	Let $d \in \N$, $\alpha > 0$ and $p \in (1, d/\alpha]$ be given.
	Then, there exist constants $c, C > 0$
	such that
	\begin{equation*}
		c
		\int_{\R^d} (g_\alpha \cnv \mu)^{p'} \d\lambda^d
		\le
		\int_{\R^d}
		\int_0^1
		\parens*{
			\frac{ \mu(B_r(x)) }{r^{d - \alpha p}}
		}^{p' - 1}
		\frac{\d\lambda^1(r)}{r}
		\d\mu(x)
		\le
		C
		\int_{\R^d} (g_\alpha \cnv \mu)^{p'} \d\lambda^d
	\end{equation*}
	holds for all $\mu \in \MM^+(\R^d)$.
\end{theorem}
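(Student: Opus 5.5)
The plan is to prove Wolff's inequality in the usual two stages: first establish a dyadic/discrete version of the estimate, then compare the Bessel potential with the Riesz-type truncated potential
\begin{equation*}
	W(x) \coloneqq \int_0^1 \frac{\mu(B_r(x))}{r^{d-\alpha}} \frac{\d\lambda^1(r)}{r}.
\end{equation*}
For $p \le d/\alpha$ the Bessel kernel satisfies $g_\alpha(x) \approx \abs{x}^{\alpha-d}$ for $\abs{x}\le 1$ (with a logarithm if $\alpha = d$) and decays exponentially for $\abs{x} \ge 1$. Hence $g_\alpha \cnv \mu$ is comparable to $W$ plus a tail term of the form $\int e^{-c\abs{x-y}}\d\mu(y)$. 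This tail is in turn dominated, in the $L^{p'}$ sense, by a sum over unit cubes, and that sum is controlled by the $r\in[1/2,1]$ part of the right-hand side.

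The key steps, in order, are as follows.

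\begin{enumerate}
\item Discretize $W$ over dyadic cubes $Q$ of side $\ell(Q)\le 1$. This gives
\begin{equation*}
	g_\alpha\cnv\mu \approx \sum_Q \mu(Q)\,\ell(Q)^{\alpha-d}\,\chi_Q,
\end{equation*}
up to shifted dyadic grids. The standard one-third trick with finitely many translated grids handles the comparison between balls and cubes.
\item Prove the lower bound $\int (g_\alpha\cnv\mu)^{p'} \ge c\int\int_0^1(\ldots)^{p'-1}\,\d r/r\,\d\mu$. Since $p'-1>0$ and all terms are nonnegative, this direction is elementary. Write $(\sum_Q a_Q\chi_Q)^{p'} \ge \sum_Q a_Q\chi_Q (\sum_{Q'\supset Q} a_{Q'})^{p'-1}$ and keep only the diagonal contributions. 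Integrating then yields $\sum_Q \mu(Q)\,(\mu(Q)\ell(Q)^{\alpha p-d})^{p'-1}$, which is the discretized right-hand side.
\item Prove the upper bound, which is the genuine Wolff inequality and the main obstacle. I would follow Hedberg--Wolff and first use duality, $\norm{g_\alpha\cnv\mu}_{p'}^{p'} = \int g_\alpha \cnv (g_\alpha\cnv\mu)^{p'-1}\,\d\mu$. If $p'\le 2$, I would apply the subadditivity of $t\mapsto t^{p'-1}$ to the dyadic sum and then sum geometric series in the scale. If $p'>2$, I would use the good-$\lambda$ / maximal function argument: expand $(\sum_Q a_Q\chi_Q)^{p'}$ and bound the inner sums over larger cubes by a maximal operator of $\mu$ at the appropriate scale. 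The key estimate is $\sum_{Q'\supset Q} a_{Q'} \lesssim \sup$-type quantities, which reduce to the Wolff potential after a Hölder argument using $p\le d/\alpha$ to get summable powers $\ell^{(\alpha p-d)(\cdot)}$.
\end{enumerate}

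Since this is a classical theorem cited from \cite{HedbergWolff1983}, I would present only these reductions and refer to \cite[Theorem~4.5.2]{AdamsHedberg1996} for the delicate $p'>2$ case.
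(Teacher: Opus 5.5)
The paper gives no proof of this statement. It imports the two-sided estimate from \cite{HedbergWolff1983}, \cite[Theorem~4.5.2 and (4.5.4)]{AdamsHedberg1996} and \cite{Ziemer1989} as a black box. You rely on the same source, but only for the hard (left-hand) inequality when $p'>2$. You correctly identify the right-hand inequality as the elementary one. Your dyadic treatment of it, of the case $p'\le 2$ of the left-hand inequality, and of the exponentially decaying tail is sound up to routine details. The citation buys brevity; your route buys a partly self-contained argument. Since you defer the remaining case to \cite[Theorem~4.5.2]{AdamsHedberg1996}, there is no gap in the logic.

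Your sketch of the $p'>2$ case is, however, not right as written and should not be presented as a proof. For $\ell(Q)\le 1$ and $\alpha p\le d$, the powers $\ell(Q)^{(\alpha p-d)s}$ with $s>0$ are $\ge 1$. So the hypothesis $p\le d/\alpha$ cannot produce summability over scales. In a dyadic argument it only serves to guarantee $\alpha<d$, so that $g_\alpha(x)\approx\abs{x}^{\alpha-d}$ near $0$; this also makes your logarithmic case $\alpha=d$ vacuous.

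The decay actually comes from $\alpha(p-1)>0$. Using it makes both the good-$\lambda$ argument and the split at $p'=2$ unnecessary. Set $a_Q=\mu(Q)\ell(Q)^{\alpha-d}$ for dyadic $Q$ with $\ell(Q)\le 1$. Telescoping plus convexity gives the elementary inequality $\bigl(\sum_{Q\ni x}a_Q\bigr)^{p'}\le p'\sum_{Q\ni x}a_Q\bigl(\sum_{Q'\supseteq Q}a_{Q'}\bigr)^{p'-1}$. Together with $a_Q\lambda^d(Q)=\mu(Q)\ell(Q)^{\alpha}$, this yields
\begin{equation*}
	\int_{\R^d}\Bigl(\sum_{Q}a_Q\chi_Q\Bigr)^{p'}\d\lambda^d
	\le
	p'\int_{\R^d}\sum_{Q\ni x}\Bigl(\sum_{Q'\supseteq Q}\mu(Q')\,\ell(Q')^{\alpha p-d}\Bigl(\frac{\ell(Q)}{\ell(Q')}\Bigr)^{\alpha(p-1)}\Bigr)^{p'-1}\d\mu(x).
\end{equation*}
Along the chain of dyadic cubes containing $x$, the inner sum is a convolution of $b_{Q'}=\mu(Q')\ell(Q')^{\alpha p-d}$ with the geometric kernel $2^{-\alpha(p-1)m}$. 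Hence Young's inequality in $\ell^{p'-1}$ (or subadditivity of $t\mapsto t^{p'-1}$ if $p'<2$) bounds the right-hand side by $C\sum_Q\mu(Q)\bigl(\mu(Q)\ell(Q)^{\alpha p-d}\bigr)^{p'-1}$, for every $p\in(1,\infty)$. This dyadic Wolff sum is dominated by the Wolff integral, since $Q\subset B_{\sqrt d\,\ell(Q)}(x)$ for $x\in Q$; the scales $r\in(1,\sqrt d\,]$ are handled like your tail. Combined with your shifted grids and tail estimate, this would turn your outline into a complete, citation-free proof.
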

This directly implies
\begin{equation}
	\label{eq:equivalent_dual_norms}
	c
	\norm{\mu}_{L^{\alpha,p}(\R^d)\dualspace}^{p'}
	\le
	\int_{\R^d}
	\int_0^1
	\parens*{
		\frac{ \mu(B_r(x)) }{r^{d - \alpha p}}
	}^{p' - 1}
	\frac{\d\lambda^1(r)}{r}
	\d\mu(x)
	\le
	C
	\norm{\mu}_{L^{\alpha,p}(\R^d)\dualspace}^{p'}
\end{equation}
for all $\mu \in \MM^+(\R^d)$, see \cref{lem:dual_measures}.

Given a measurable map $T \colon \R^d \to \R^d$,
we consider the pushforward measure $T \pushforward \mu$
which is defined via
\begin{equation*}
	(T \pushforward \mu)(B) \coloneqq \mu(T^{-1}(B))
\end{equation*}
for all Borel sets $B \subset \R^d$.
Note that this directly implies
\begin{equation}
	\label{eq:integral_pushforward}
	\int_{\R^d} f \circ T \d\mu
	=
	\int_{\R^d} f \d(T \pushforward \mu)
\end{equation}
for all Borel measurable $f \colon \R^d \to [0,\infty]$
and for all Borel measurable and $\mu$-integrable $f \colon \R^d \to \R$.

The next result can be considered the main result of this section
and it relates the Bessel dual norms of $\mu$ and $T \pushforward \mu$
in case that $T$ is Lipschitz.

\begin{theorem}
	\label{thm:pushforward_dual}
	Let $d \in \N$, $\alpha > 0$, $p \in (1, d/\alpha]$ and $L > 0$ be given.
	Then, there exists a constant $C > 0$
	such that for all $T \colon \R^d \to \R^d$ which are Lipschitz continuous with constant $L$
	and for all $\mu \in \MM^+(\R^d)$,
	we have
	\begin{equation*}
		\norm{ \mu }_{L^{\alpha, p}(\R^d)\dualspace}
		\le
		C \norm{ T \pushforward \mu }_{L^{\alpha, p}(\R^d)\dualspace}
		.
	\end{equation*}
\end{theorem}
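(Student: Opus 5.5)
We plan to use the Wolff-type characterization \eqref{eq:equivalent_dual_norms}, which turns the claim into a comparison of two explicit integrals:
\begin{equation*}
	I(\mu) \coloneqq \int_{\R^d} \int_0^1 \parens*{\frac{\mu(B_r(x))}{r^{d-\alpha p}}}^{p'-1} \frac{\d\lambda^1(r)}{r} \d\mu(x)
\end{equation*}
and $I(T\pushforward\mu)$. It suffices to show $I(\mu) \le C\, I(T\pushforward\mu)$ with $C$ depending only on $d$, $\alpha$, $p$, $L$. The basic geometric input is the Lipschitz property: $T(B_r(x)) \subset B_{Lr}(T(x))$, so $B_r(x) \subset T^{-1}(B_{Lr}(T(x)))$. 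Consequently
\begin{equation*}
	\mu(B_r(x)) \le (T\pushforward\mu)(B_{Lr}(T(x))).
\end{equation*}

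Inserting this bound into $I(\mu)$, the integrand in $x$ becomes a function of $T(x)$ only. Using \eqref{eq:integral_pushforward} with the nonnegative Borel function
\begin{equation*}
	f(y) = \int_0^1 \parens*{\frac{(T\pushforward\mu)(B_{Lr}(y))}{r^{d-\alpha p}}}^{p'-1}\frac{\d r}{r},
\end{equation*}
we obtain
\begin{equation*}
	I(\mu) \le \int_{\R^d} \int_0^1 \parens*{\frac{\nu(B_{Lr}(y))}{r^{d-\alpha p}}}^{p'-1}\frac{\d r}{r}\d\nu(y),
	\qquad \nu \coloneqq T\pushforward\mu .
\end{equation*}
Substituting $s = Lr$ gives a factor $L^{(d-\alpha p)(p'-1)}$ and turns the upper limit into $L$. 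Measurability of $y \mapsto \nu(B_s(y))$ is standard, since it is lower semicontinuous for open balls.

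It remains to control the range $s\in[1,L]$ when $L>1$ (for $L\le1$ there is nothing to do). This tail is the main point needing care, because \eqref{eq:equivalent_dual_norms} only integrates over radii $s\in(0,1]$. We plan to cover $B_s(y)$, for $s\le L$, by $N = N(d,L)$ balls $B_{1/2}(z_i)$. For each $y'\in B_{1/2}(z_i)$ we have $B_{1/2}(z_i)\subset B_1(y')$, which suggests comparing with the integrand at radii in $[1/2,1]$. We would first try the simpler route: $\nu(B_s(y))^{p'-1}\le N^{p'-1}\max_i \nu(B_{1/2}(z_i))^{p'-1}$, then integrate $\d\nu(y)$ and dominate by $\int\!\int_{1/2}^1 \nu(B_r(y'))^{p'-1}\d r\,\d\nu(y')$. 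Making this domination precise requires a bounded-overlap covering argument; this is the likely obstacle.

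A cleaner alternative avoids the covering altogether: pass back through \cref{lem:dual_measures}. Since $g_\alpha$ is radially decreasing and positive, $g_\alpha(z/L)\le C_L\, g_\alpha(z)$ fails near the origin only in the wrong direction. So instead we would use the Wolff inequality with the equivalent kernel truncated at radius $L$ in place of $1$. The two-sided estimate in \cref{thm:norm_in_dual_space} is known to hold with any fixed upper radius $R$, with constants depending on $R$ (the proof in \cite{AdamsHedberg1996} is scale-flexible). Citing that version disposes of the tail directly, and $C$ then depends only on $d$, $\alpha$, $p$, $L$, as required.
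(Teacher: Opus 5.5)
Your reduction is the same as the paper's. The inclusion $B_r(x)\subset T^{-1}(B_{Lr}(T(x)))$, the pushforward identity \eqref{eq:integral_pushforward} and the substitution $s=Lr$ give \eqref{eq:inequality_in_some_proof}, and for $L\le 1$ you are finished by \cref{thm:norm_in_dual_space}.

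The gap is the case $L>1$. With $\nu\coloneqq T\pushforward\mu$ you still have to prove
\begin{equation*}
	\int_{\R^d}\int_1^L\parens*{\frac{\nu(B_s(x))}{s^{d-\alpha p}}}^{p'-1}\frac{\d\lambda^1(s)}{s}\d\nu(x)
	\le
	C\norm{g_\alpha\cnv\nu}_{L^{p'}(\R^d)}^{p'},
\end{equation*}
and neither of your routes does this.
\begin{itemize}
	\item Your covering sketch stops where the work begins. The ball $B_{1/2}(z_i)$ realizing the maximum can lie at distance up to $L$ from $y$, so $\nu(B_{1/2}(z_i))$ is not controlled by $\nu(B_r(y))$ for $r\le 1$. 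Moving the $\d\nu$-integration onto $B_{1/2}(z_i)$ produces mixed products of masses of different regions, and these need an additional argument.
	\item Your ``cleaner alternative'' postulates \cref{thm:norm_in_dual_space} with upper limit $L$ in place of $1$. Given the theorem as stated, the lower half of that variant is trivial and its upper half is precisely the display above. Calling the cited proof ``scale-flexible'' therefore relocates the missing step rather than supplying it.
\end{itemize}
Your aside on $g_\alpha(z/L)\le C_L g_\alpha(z)$ is also backwards: this bound holds near the origin and fails at infinity, where $g_\alpha$ decays exponentially.

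The missing idea, which is the paper's argument, is short.
\begin{itemize}
	\item Since $g_\alpha$ is positive and radially decreasing, $g_\alpha\ge\varepsilon\chi_{B_{2L}(0)}$. Hence $(g_\alpha\cnv\nu)(y)\ge\varepsilon\,\nu(B_L(x))$ whenever $\abs{x-y}\le L$.
	\item Convolving $(g_\alpha\cnv\nu)^{p'-1}$ once more with $g_\alpha$ gives $\bigl(g_\alpha\cnv(g_\alpha\cnv\nu)^{p'-1}\bigr)(x)\ge c_L\,\nu(B_L(x))^{p'-1}$.
	\item Integrate against $\nu$ and use the Tonelli identity $\int(g_\alpha\cnv\nu)^{p'}\d\lambda^d=\int g_\alpha\cnv(g_\alpha\cnv\nu)^{p'-1}\d\nu$. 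This yields $\int\nu(B_L(x))^{p'-1}\d\nu(x)\le C_L\norm{g_\alpha\cnv\nu}_{L^{p'}(\R^d)}^{p'}$.
	\item This dominates the tail, since for $s\in[1,L]$ the integrand is at most $\nu(B_L(x))^{p'-1}$ (recall $d-\alpha p\ge 0$).
\end{itemize}

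Alternatively, your covering idea can be completed without returning to $g_\alpha$.
\begin{itemize}
	\item Partition $\R^d$ into lattice cubes $Q_j$ of diameter at most $1/4$. Let $N(j)$ be the set of indices $k$ with $\operatorname{dist}(Q_j,Q_k)<L$, so that $\#N(j)\le M(d,L)$.
	\item For $x\in Q_j$ this gives $\nu(B_L(x))^{p'-1}\le M^{p'-1}\sum_{k\in N(j)}\nu(Q_k)^{p'-1}$.
	\item Young's inequality $ab^{p'-1}\le a^{p'}/p'+b^{p'}/p$, together with the symmetry of $N$, bounds $\int\nu(B_L(x))^{p'-1}\d\nu(x)$ by $M^{p'}\sum_j\nu(Q_j)^{p'}$.
	\item Finally, $Q_j\subset B_r(x)$ for $x\in Q_j$ and $r\in[1/2,1]$. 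Hence $\sum_j\nu(Q_j)^{p'}$ is at most $(\ln 2)^{-1}$ times the Wolff integral of $\nu$ over radii in $(0,1)$.
\end{itemize}
One of these arguments must be supplied for the proof to be complete.
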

\begin{proof}
	For $x \in \R^d$, we have
	\begin{equation*}
		y \in B_r(x)
		\;\Rightarrow\;
		\abs{ T(x) - T(y) }_{\R^d}
		\le
		L r
		\;\Rightarrow\;
		T(y) \in B_{L r}(T(x)).
	\end{equation*}
	Consequently,
	$B_r(x) \subset T^{-1}(B_{L r}(T(x)))$.
	Thus,
	\begin{align*}
		\MoveEqLeft
		\int_{\R^d}
		\int_0^1
		\parens*{
			\frac{ \mu(B_r(x)) }{r^{d - \alpha p}}
		}^{p' - 1}
		\frac{\d\lambda^1(r)}{r}
		\d\mu(x)
		\\&
		\le
		\int_{\R^d}
		\int_0^1
		\parens*{
			\frac{ \mu(T^{-1}(B_{L r}(T(x)))) }{r^{d - \alpha p}}
		}^{p' - 1}
		\frac{\d\lambda^1(r)}{r}
		\d\mu(x)
		\\&
		=
		\int_{\R^d}
		\int_0^1
		\parens*{
			\frac{ (T \pushforward \mu)(B_{L r}(T(x))) }{r^{d - \alpha p}}
		}^{p' - 1}
		\frac{\d\lambda^1(r)}{r}
		\d\mu(x)
		\\&
		=
		\int_{\R^d}
		\int_0^1
		\parens*{
			\frac{ (T \pushforward \mu)(B_{L r}(x)) }{r^{d - \alpha p}}
		}^{p' - 1}
		\frac{\d\lambda^1(r)}{r}
		\d(T \pushforward \mu)(x)
		,
	\end{align*}
	where we used \eqref{eq:integral_pushforward} in the last step.
	Now, we make the substitution $s = L r$ in the inner integral.
	Thus,
	\begin{equation}
		\label{eq:inequality_in_some_proof}
		\begin{aligned}
			\MoveEqLeft
			\int_{\R^d}
			\int_0^1
			\parens*{
				\frac{ \mu(B_r(x)) }{r^{d - \alpha p}}
			}^{p' - 1}
			\frac{\d\lambda^1(r)}{r}
			\d\mu(x)
			\\&
			\le
			L^{(d - \alpha p)(p' - 1)}
			\int_{\R^d}
			\int_0^L
			\parens*{
				\frac{ (T \pushforward \mu)(B_{s}(x)) }{s^{d - \alpha p}}
			}^{p' - 1}
			\frac{\d\lambda^1(s)}{s}
			\d(T \pushforward \mu)(x)
			.
		\end{aligned}
	\end{equation}
	In case $L \le 1$, the estimate now follows from \cref{thm:norm_in_dual_space},
	since the inner integral can be bounded from above
	by the integral over $[0,1]$.
	In case $L > 1$, we have to show that the inner integral from $1$ to $L$
	can also be estimated by the dual norm of $T \pushforward \mu$.
	For brevity, we define $\nu \coloneqq T \pushforward \mu$.
	Since the Bessel kernel is radially decreasing and positive, there exists $\varepsilon > 0$
	such that $g_\alpha \ge \varepsilon \chi_{B_{2 L}(0)}$.
	For arbitrary $x,y \in \R^d$ with $\abs{x - y} \le L$,
	this implies
	\begin{equation*}
		(g_\alpha \cnv \nu)(y)
		=
		\int_{\R^d} g_\alpha(y - z) \d\nu(z)
		\ge
		\int_{B_{2 L}(y)} \varepsilon \d\nu(z)
		=
		\varepsilon \nu(B_{2 L}(y))
		\ge
		\varepsilon \nu(B_{L}(x))
		.
	\end{equation*}
	With the abbreviations
	$f(y) \coloneqq (g_\alpha \cnv \nu)(y)^{p'-1}$
	and $\kappa \coloneqq (\varepsilon \nu(B_{L}(x)))^{p'-1}$,
	this inequality shows $f \ge \kappa \chi_{B_L(x)}$.
	Consequently,
	\begin{equation*}
		(g_\alpha \cnv f)(x)
		=
		\int_{\R^d} g_\alpha(x - y) f(y) \d\lambda^d(y)
		\ge
		\int_{B_L(x)} g_\alpha(x - y) \kappa \d\lambda^d(y)
		\ge
		\varepsilon \lambda^d(B_L(x)) \kappa,
	\end{equation*}
	i.e.,
	\begin{equation*}
		(g_\alpha \cnv (g_\alpha \cnv \nu)^{p'-1})(x)
		\ge
		\varepsilon^{p'} L^d \lambda^d(B_1(0)) \nu(B_L(x))^{p'-1}
		.
	\end{equation*}
	Now, we integrate this inequality w.r.t.\ $\nu$ and get
	\begin{equation*}
		\int_{\R^d} (g_\alpha \cnv \nu)^{p'} \d\lambda^d
		=
		\int_{\R^d} g_\alpha \cnv (g_\alpha \cnv \nu)^{p'-1} \d\nu
		\ge
		\varepsilon^{p'} L^d \lambda^d(B_1(0)) \int_{\R^d} \nu(B_L(x))^{p'-1} \d\nu(x)
		.
	\end{equation*}
	Consequently,
	\begin{align*}
		\int_{\R^d}
		\int_1^L
		\parens*{
			\frac{ \nu(B_{s}(x)) }{s^{d - \alpha p}}
		}^{p' - 1}
		\frac{\d\lambda^1(s)}{s}
		\d\nu(x)
		&
		\le
		(L - 1)
		\int_{\R^d}
		\nu(B_{L}(x))^{p' - 1}
		\d\nu(x)
		\\&
		\le
		\frac{L - 1}{
			\varepsilon^{p'} L^d \lambda^d(B_1(0))
		}
		\int_{\R^d} (g_\alpha \cnv \nu)^{p'} \d\lambda^d
		.
	\end{align*}
	By combining this estimate with \eqref{eq:inequality_in_some_proof}
	and \cref{thm:norm_in_dual_space},
	we obtain the claim.
\end{proof}
Finally, we mention that the proof in the case $p = 2$ and $L \le 1$
is much simpler and we can avoid \cref{thm:norm_in_dual_space}.
Indeed,
an application of Fubini's theorem implies
\begin{equation*}
	\int_{\R^d} ( g_\alpha \cnv \mu)^2 \d\lambda^d
	=
	\int_{\R^d} g_\alpha \cnv (g_\alpha \cnv \mu) \d\mu
	=
	\int_{\R^d} (g_\alpha \cnv g_\alpha) \cnv \mu \d\mu
	=
	\int_{\R^d} g_{2\alpha} \cnv \mu \d\mu
	.
\end{equation*}
This also applies to $\nu \coloneqq T \pushforward \mu$ and we get
\begin{align*}
	\norm{ T \pushforward \mu }_{L^{\alpha,2}(\R^d)\dualspace}^2
	&=
	\int_{\R^d} ( g_\alpha \cnv \nu)^2 \d\lambda^d
	=
	\int_{\R^d} g_{2\alpha} \cnv \nu \d\nu
	=
	\int_{\R^d} \int_{\R^d} g_{2\alpha}(x - y) \d\nu(y) \d\nu(x)
	\\
	&=
	\int_{\R^d} \int_{\R^d} g_{2\alpha}(T(x) - T(y)) \d\mu(y) \d\mu(x)
	\\
	&\ge
	\int_{\R^d} \int_{\R^d} g_{2\alpha}(x - y) \d\mu(y) \d\mu(x)
	= \ldots =
	\norm{ \mu }_{L^{\alpha,2}(\R^d)\dualspace}^2
	,
\end{align*}
where the inequality follows from the fact that $g_{2\alpha}$ is radially decreasing.
In particular, this shows that we can use the constant $C = 1$ in case $p = 2$ and $L \le 1$.

\section{Growth inequalities}
\label{sec:inequalities}
In this section,
we assume that the following structural condition holds.

\begin{assumption}
	\label{asm:standing}
	The set $\Omega \subset \R^d$, $d \in \N$, is assumed to be open and bounded.
	The function $\bar\varphi \in C^1(\R^d)$
	satisfies $\nabla\bar\varphi \ne 0$ on $\set{\bar\varphi = 0} \cap \bar\Omega$.
\end{assumption}
Note that we assumed that the function $\bar\varphi$ is defined on all of $\R^d$.
In this way, we circumvent some intricacies with possible definitions of $C^1(\bar\Omega)$
if the set $\Omega$ is of low regularity.
In \cref{asm:standing}, we use the notation
$\set{\bar\varphi = 0} = \set{x \in \R^d \given \bar\varphi(x) = 0}$.

In \cref{subsec:subderivatives},
the findings of the current section will be applied to the problem \eqref{eq:P}.
For this, $\bar u \in \Uad$ is a stationary point,
i.e., $\bar\varphi \coloneqq F'(\bar u)$ satisfies
the first-order condition
\begin{equation}
	\label{eq:FONC}
	\int_\Omega \bar\varphi (u - \bar u) \d\lambda^d
	\ge 0
	\qquad\forall u \in \Uad.
\end{equation}
Note that this implies that
$\bar\varphi (u - \bar u) \ge 0$
a.e.\ in $\Omega$ and, consequently,
\begin{equation*}
	\int_\Omega \bar\varphi (u - \bar u) \d\lambda^d
	=
	\int_\Omega \abs{ \bar\varphi (u - \bar u)} \d\lambda^d
	.
\end{equation*}
Together with $\abs{u - \bar u} \le 2$ for all $u \in \Uad$,
we see that
inequality \eqref{eq:main_inequality} is equivalent to
\begin{equation}
	\label{eq:growth_L1}
	\norm{v}_{L^\infty(\Omega)}
	\int_\Omega \abs{\bar\varphi v} \d\lambda^d
	\ge
	c \norm{v}_{L^1(\Omega)}^2
	\qquad
	\forall v \in L^\infty(\Omega)
	.
\end{equation}
In what follows,
we prove that the $L^1(\Omega)$-norm on the right-hand side
can be replaced by suitable norms in dual Bessel potential spaces.

We start by recalling some implications of \cref{asm:standing}.
It is well known that it
implies the existence of a constant $C > 0$
such that
\begin{equation*}
	\lambda^d(\bar\Omega \cap \set{\abs{\bar\varphi} \le \varepsilon}) \le C \varepsilon
	,
\end{equation*}
see \cite[Lemma~3.2]{DeckelnickHinze2012}.
Consequently,
\cite[Proposition~2.7]{CasasWachsmuthWachsmuth2016:1}
implies that \eqref{eq:main_inequality}
and \eqref{eq:growth_L1} hold.
For later reference and for completeness, we prove an one-dimensional version of \eqref{eq:growth_L1}.
\begin{lemma}
	\label{lem:one-dimensional}
	Let $a < b$ be given and assume that the function
	$\psi \in L^\infty(a,b)$ satisfies $\abs{\psi(t)} \ge k \abs{t - \gamma}$ for almost all $t \in (a,b)$
	for some constants $k > 0$ and $\gamma \in (a,b)$.
	Then,
	\begin{equation*}
		\norm{v}_{L^\infty(a,b)}
		\int_a^b \abs{\psi v} \d\lambda^1
		\ge
		\frac{k}{8} \norm{v}_{L^1(a,b)}^2
		\qquad
		\forall v \in L^\infty(a,b)
		.
	\end{equation*}
\end{lemma}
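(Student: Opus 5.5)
We plan a layer-cake (bathtub) argument. Fix $v \in L^\infty(a,b)$ with $v \ne 0$, and set $M \coloneqq \norm{v}_{L^\infty(a,b)}$ and $m \coloneqq \norm{v}_{L^1(a,b)}$. Since $\abs{\psi} \ge k\abs{t-\gamma}$, it suffices to show
\begin{equation*}
	M \int_a^b k \abs{t-\gamma}\,\abs{v(t)} \d\lambda^1(t) \ge \frac{k}{8} m^2 .
\end{equation*}
The idea is that, among all nonnegative $w$ with $w \le M$ and $\int w = m$, the integral $\int \abs{t-\gamma} w(t)\d t$ is smallest when the mass sits as close to $\gamma$ as possible. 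That extremal choice is $w = M\chi_{(\gamma - s,\gamma+s)}$ with $2sM = m$, ignoring the interval endpoints for the moment.

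Step one is to make this rigorous without rearrangements. Set $s \coloneqq m/(2M)$ and $I \coloneqq (\gamma-s,\gamma+s)$. Outside $I$ we have $\abs{t-\gamma}\ge s$, so
\begin{equation*}
	\int_a^b \abs{t-\gamma}\abs{v}\d t
	\ge s\int_{(a,b)\setminus I}\abs v \d t
	= s\Bigl(m - \int_{I\cap(a,b)} \abs v \d t\Bigr).
\end{equation*}
Inside $I$ we use the bound $\abs{t-\gamma}\abs{v} \ge \abs{t-\gamma}\abs v$ directly. It is cleaner, though, to use the pointwise inequality $\abs{t-\gamma}\abs{v(t)} \ge s\abs{v(t)} - s(M-\abs{v(t)})\cdot 0 \ldots$. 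So instead we use the standard trick
\begin{equation*}
	(\abs{t-\gamma} - s)\,(\abs{v(t)} - M\chi_I(t)) \ge 0,
\end{equation*}
which holds because both factors have the same sign: for $t\in I$ both are $\le 0$, and for $t\notin I$ both are $\ge 0$.

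Integrating this inequality over $(a,b)$ gives
\begin{equation*}
	\int \abs{t-\gamma}\abs v \ge s\int\abs v + M\int_{I\cap(a,b)}\bigl(\abs{t-\gamma}-s\bigr)\d t .
\end{equation*}
The last integral is bounded below by $\int_I(\abs{t-\gamma}-s)\d t = -s^2$, because the integrand is nonpositive on $I$. Hence
\begin{equation*}
	\int\abs{t-\gamma}\abs v \ge sm - Ms^2 = \frac{m^2}{2M} - \frac{m^2}{4M} = \frac{m^2}{4M},
\end{equation*}
which gives the claim with constant $k/4$. This is even better than the required $k/8$, which leaves room for the endpoint issue.

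The only delicate point is exactly this endpoint issue: $I$ need not lie in $(a,b)$. It is handled by the observation above that dropping $I\setminus(a,b)$ only increases the nonpositive integral, so truncation to $(a,b)$ does no harm. I would double-check that step, along with the trivial case $v=0$, in the written proof. Finally, multiplying by $kM$ yields the stated inequality.
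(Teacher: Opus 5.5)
Your proof is correct, and it takes a genuinely different route from the paper's.

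\textbf{The paper's route.} The paper works with the sublevel set $E_\varepsilon = \{|\psi| \le \varepsilon\}$ and uses only the measure bound $\lambda^1(E_\varepsilon) \le 2\varepsilon/k$. It discards $\int_{E_\varepsilon} |\psi v|$ entirely and charges $E_\varepsilon$ the full amount $\varepsilon \lambda^1(E_\varepsilon) M$. This gives $\int_a^b |\psi v| \ge \varepsilon m - \frac{2}{k}\varepsilon^2 M$, and optimizing over $\varepsilon$ yields exactly $\frac{k}{8}\, m^2/M$.

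\textbf{Your route.} You pass to the model weight $k|t-\gamma|$ and use the bathtub-type sign inequality $(|t-\gamma|-s)(|v|-M\chi_I)\ge 0$. Integrating it charges the region $I$ only $M\int_I (s-|t-\gamma|)\,dt = Ms^2$, instead of the $M s\,\lambda^1(I) = 2Ms^2$ implicit in the paper's estimate. That factor of two is exactly why you obtain $k/4$ rather than $k/8$.

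Your constant is sharp: equality holds for $\psi = k|t-\gamma|$ and $v = M\chi_I$ with $I \subset (a,b)$. Your endpoint argument is also right. The integrand $|t-\gamma|-s$ is nonpositive on $I$, so restricting to $I\cap(a,b)$ can only increase the integral. Hence you do not actually need any slack from $k/8$.

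\textbf{What each approach buys.} The paper's argument uses nothing but a linear bound on the measure of the sublevel sets of $|\psi|$. This is the same hypothesis, $\lambda^d(\{|\bar\varphi|\le\varepsilon\})\le C\varepsilon$, under which the $d$-dimensional estimate \eqref{eq:growth_L1} is obtained, so it transfers verbatim. Your version as written relies on the explicit comparison function. It can, however, be run on $\{|\psi|<s\}$ directly, using the layer-cake identity $\int (s-|\psi|)^+ = \int_0^s \lambda^1(\{|\psi|<r\})\,dr \le s^2/k$. That keeps the constant $k/4$ under the same weaker hypothesis.

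\textbf{Write-up.} In a final version, delete the abandoned false start: the tautological bound ``inside $I$'' and the sentence ending in ``$\cdot 0 \ldots$''. Only the sign inequality and the computation after it are actually used.
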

\begin{proof}
	We follow the proof of \cite[Proposition~2.7]{CasasWachsmuthWachsmuth2016:1}.
	For $\varepsilon > 0$, we define $E_\varepsilon \coloneqq \set{ \abs{\psi} \le \varepsilon}$.
	Note that the assumptions imply $\lambda^1(E_\varepsilon) \le 2 \varepsilon / k$.
	For arbitrary $v \in L^\infty(a,b)$ we have
	\begin{equation*}
		\int_a^b \abs{\psi v}\d\lambda^1
		\ge
		\varepsilon \int_{(a,b) \setminus E_\varepsilon} \abs{v} \d\lambda^1
		=
		\varepsilon \parens*{
			\norm{v}_{L^1(a,b)}
			-
			\norm{v}_{L^1(E_\varepsilon)}
		}
		.
	\end{equation*}
	Combining this with the inequality
	$\norm{v}_{L^1(E_\varepsilon)} \le \lambda^1(E_\varepsilon) \norm{v}_{L^\infty(a,b)}$,
	we arrive at
	\begin{equation*}
		\int_a^b \abs{\psi v}\d\lambda^1
		\ge
		\varepsilon \norm{v}_{L^1(a,b)}
		-
		\frac{2}{k} \varepsilon^2 \norm{v}_{L^\infty(a,b)}
		.
	\end{equation*}
	Using $\varepsilon = \alpha \norm{v}_{L^1(a,b)}/\norm{v}_{L^\infty(a,b)}$
	with the optimal choice $\alpha = k/4$
	yields the claim.
\end{proof}

\begin{theorem}
	\label{thm:main_inequality}
	Let \cref{asm:standing} be satisfied.
	Further, let $\alpha > 0$ and $p \in (1, d/\alpha]$ be given
	such that
	$p \ge 2 d / (d - 1 + 2 \alpha)$
	and
	$\alpha > 1/p$.
	Then, there exists a constant $c > 0$
	such that
	\begin{equation}
		\label{eq:main_inequality_bessel}
		\norm{v}_{L^\infty(\Omega)}
		\int_\Omega \abs{\bar\varphi v} \d\lambda^d
		\ge
		c
		\norm{v}_{L^{\alpha,p}(\R^d)\dualspace}^2
	\end{equation}
	holds for all $v \in L^\infty(\Omega)$.
\end{theorem}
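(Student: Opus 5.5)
The plan is to split $\Omega$ into a part where $\abs{\bar\varphi}$ is bounded away from zero and finitely many patches near $\set{\bar\varphi=0}\cap\bar\Omega$. On each patch we straighten the level set with a bi-Lipschitz map, apply \cref{lem:one-dimensional} along the fibres, and use \cref{thm:pushforward_dual} twice. First, we may replace $v$ by $\abs{v}$: pointwise $\abs{g_\alpha\cnv v}\le g_\alpha\cnv\abs{v}$, so by \cref{lem:dual_measures} the dual norm of $v$ is bounded by that of the nonnegative measure $\abs{v}\lambda^d$, while the left-hand side of \eqref{eq:main_inequality_bessel} is unchanged. Next we cover the compact set $\set{\bar\varphi=0}\cap\bar\Omega$ by finitely many small open sets $U_1,\dots,U_N$. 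After a rotation, on each $U_i$ we have $\partial_d\bar\varphi\ge k>0$, and the map $T_i(x',x_d)\coloneqq(x',\bar\varphi(x))$ is bi-Lipschitz onto its image. Let $\chi_0,\dots,\chi_N$ be a partition of unity with $\abs{\bar\varphi}\ge\delta>0$ on $\operatorname{supp}\chi_0\cap\bar\Omega$. The triangle inequality then reduces the claim to the pieces $v_i\coloneqq\chi_i\abs{v}$, since $\sum_i\norm{v_i}^2\ge\frac1{N+1}(\sum_i\norm{v_i})^2$.

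For the piece away from the zero set we use the Sobolev embedding $L^{\alpha,p}(\R^d)\embeds L^{q}(\R^d)$ with $q=dp/(d-\alpha p)$ (any $q<\infty$ if $\alpha p=d$). This gives $\norm{v_0}_{L^{\alpha,p}(\R^d)\dualspace}\le C\norm{v_0}_{L^{r}}$ with $r=q'$. The hypothesis $p\ge 2d/(d-1+2\alpha)\ge 2d/(d+2\alpha)$ yields $q\ge2$, i.e.\ $r\le 2$. By interpolation and boundedness of $\Omega$, $\norm{v_0}_{L^r}^2\le\norm{v_0}_{L^1}^{2/r}\norm{v_0}_{L^\infty}^{2-2/r}\le C\norm{v}_{L^\infty}\norm{v_0}_{L^1}$. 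Since $\norm{v_0}_{L^1}\le\delta^{-1}\int_\Omega\abs{\bar\varphi v}$, this piece is done.

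For a patch $U_i$, extend $T_i$ to a Lipschitz map on $\R^d$ and apply \cref{thm:pushforward_dual}: $\norm{v_i}_{\ast}\le C\norm{T_i\pushforward(v_i\lambda^d)}_{\ast}$. By the change of variables formula, $T_i\pushforward(v_i\lambda^d)=w\,\lambda^d$ with $0\le w\le C\norm{v}_{L^\infty}$, and $w$ is supported in a box $Q'\times(-1,1)$. Moreover $\int\abs{t}\,w(x',t)\d(x',t)\le C\int_\Omega\abs{\bar\varphi v}$, because $t=\bar\varphi$ on the image. Now set $m(x')\coloneqq\int w(x',t)\d t$. \cref{lem:one-dimensional} with $\psi(t)=t$ and $\gamma=0$, applied for each $x'$ and then integrated, gives
\begin{equation*}
	\int_{\R^{d-1}} m(x')^2\d x'\le C\norm{v}_{L^\infty(\Omega)}\int_\Omega\abs{\bar\varphi v}\d\lambda^d .
\end{equation*}
Next apply \cref{thm:pushforward_dual} once more, with the $1$-Lipschitz projection $S(x',t)=(x',0)$. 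This gives $\norm{w}_\ast\le C\norm{S\pushforward w}_\ast$, and $S\pushforward w$ is the measure $m\,\HH^{d-1}$ on the hyperplane $\set{t=0}$.

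The main obstacle is the final estimate $\norm{m\,\HH^{d-1}|_{\set{t=0}}}_{L^{\alpha,p}(\R^d)\dualspace}\le C\norm{m}_{L^2(\R^{d-1})}$. By duality this amounts to the trace inequality $\norm{\phi(\cdot,0)}_{L^2(\R^{d-1})}\le C\norm{\phi}_{L^{\alpha,p}(\R^d)}$ for $\phi\in C_c^\infty(\R^d)$. I would derive it from the trace theorem $L^{\alpha,p}(\R^d)\to B^{\alpha-1/p}_{p,p}(\R^{d-1})$, which is valid since $\alpha>1/p$. This is combined with the Besov embedding $B^{\alpha-1/p}_{p,p}(\R^{d-1})\embeds L^2(\R^{d-1})$, which requires $\alpha-1/p-(d-1)/p\ge-(d-1)/2$ (together with $p\le2$ in the borderline case; otherwise one uses a slightly smaller smoothness). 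This is exactly $p\ge 2d/(d-1+2\alpha)$. Chaining the estimates on the patches and summing over $i$ yields \eqref{eq:main_inequality_bessel}.
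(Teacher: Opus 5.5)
Your argument is correct, but in the decisive step it takes a genuinely different route from the paper. Your proof is essentially the heuristic the paper sketches before its proof, made rigorous with the straightening map $T_i$ and the projection $S$: push $v$ onto $\set{\bar\varphi=0}$, control the density of the pushforward in $L^2$ via \cref{lem:one-dimensional}, dualize a trace theorem, and return via \cref{thm:pushforward_dual}.

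The paper deliberately avoids both the trace theorem and any direct use of \cref{thm:pushforward_dual}:
\begin{enumerate}
\item It stays in the graph coordinates from the implicit function theorem and applies \cref{lem:one-dimensional} on vertical fibres with $\psi=\bar\varphi(x,\cdot)$. No straightening is needed, since $\partial_d\bar\varphi\ge\nu$ already gives $\abs{\bar\varphi(x,y)}\ge\nu\abs{y-\gamma(x)}$.
\item It inserts the inclusion $B_r(x,y)\cap(B\times I)\subset B_r(x)\times I$ into the Wolff potential in \eqref{eq:equivalent_dual_norms}. Because $r^{d-\alpha p}=r^{(d-1)-(\alpha-1/p)p}$, this turns the $d$-dimensional Wolff potential of $v$ into the $(d-1)$-dimensional one of $\tilde v$, of order $\alpha-1/p$.
\item It finishes with \eqref{eq:equivalent_dual_norms} in $\R^{d-1}$ together with the classical embedding $L^{\alpha-1/p,p}(\R^{d-1})\embeds L^q(\R^{d-1})\embeds L^2(B)$.
\end{enumerate}
In effect, the Wolff computation proves by hand the dual trace estimate for nonnegative measures.

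Your version is more modular and makes the geometry transparent: after straightening, the fibrewise estimate is literally $\psi(t)=t$. The price is heavier external input, namely the trace theorem $L^{\alpha,p}(\R^d)\to B^{\alpha-1/p}_{p,p}(\R^{d-1})$ and Besov embeddings. You also rely on the full strength of \cref{thm:pushforward_dual}, including the case $L>1$ for $T_i$. The paper needs only Wolff's inequality and standard Sobolev embeddings.

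One small correction. The global inequality $\norm{\phi(\cdot,0)}_{L^2(\R^{d-1})}\le C\norm{\phi}_{L^{\alpha,p}(\R^d)}$ fails for $p>2$. Take $N$ far-apart translates of a bump: the left side grows like $N^{1/2}$ and the right side like $N^{1/p}$. Lowering the smoothness does not repair this. However, you only need the estimate on the bounded set $Q'$ that supports $m$. For $p\ge 2$ it follows there from $B^{\alpha-1/p}_{p,p}(\R^{d-1})\embeds L^p(\R^{d-1})$ and Hölder's inequality. For $p<2$ your Besov argument is fine. The borderline case $p=2d/(d-1+2\alpha)$ automatically forces $p<2$, because $\alpha>1/p$.
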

Before starting with the proof,
we motivate the underlying idea,
because it is quite hidden within the technicalities.

First of all,
if the function $v$ is zero on a neighborhood of $\set{\bar\varphi = 0}$,
the left-hand side in \eqref{eq:main_inequality_bessel}
can be bounded from below by a constant times $\norm{v}_{L^2(\Omega)}^2$
and this implies \eqref{eq:main_inequality_bessel}.
Consequently,
it will be sufficient to consider a non-negative function $v$ living in the neighborhood of $\set{\bar\varphi = 0}$.
Next, we assume that we have a Lipschitz continuous projection $T$
from this neighborhood to the set $\set{\bar\varphi = 0}$.
We identify $v$ with the measure whose density w.r.t.\ $\lambda^d$ is $v$.
Associated with this measure is the measure $T \pushforward v$,
which lives on the set $\set{\bar\varphi = 0}$.
Then, $T \pushforward v$ is absolutely continuous w.r.t.\ the surface measure $\HH^{d-1}$ on $\set{\bar\varphi = 0}$.
In the proof below, the role of the corresponding density is played by the function $\tilde v$, see \eqref{eq:tilde_v}.
Via \cref{lem:one-dimensional}, we can prove
\begin{equation*}
	\norm{v}_{L^\infty(\Omega)} \int_\Omega \abs{\bar\varphi v }\d\lambda^d
	\ge
	c
	\int_{\set{\bar\varphi = 0}}
	\parens*{ \frac{\d(T \pushforward v)}{\d\HH^{d-1}} }^2
	\d\HH^{d-1}
\end{equation*}
which corresponds to inequality \eqref{eq:ineq_tilde_v}
in the proof below.
This shows that (the density of) $T \pushforward v$
belongs to the trace space $L^2(\HH^{d-1} | _{\set{\bar\varphi = 0}})$.
From a trace theorem, we get that
(under some assumptions on $\alpha$ and $p$)
the trace of functions from
$L^{\alpha,p}(\R^d)$ belongs to $L^2(\HH^{d-1} | _{\set{\bar\varphi = 0}})$.
By dualizing this embedding, we get that
$T \pushforward v$ belongs to the dual space $L^{\alpha,p}(\R^d)\dualspace$
and
\begin{equation*}
	\norm{
		T \pushforward v
	}_{L^{\alpha,p}(\R^d)\dualspace}^2
	\le
	C
	\norm{
		T \pushforward v
	}_{L^2(\HH^{d-1} | _{\set{\bar\varphi = 0}})}^2
	\le
	C
	\norm{v}_{L^\infty(\Omega)} \int_\Omega \abs{\bar\varphi v }\d\lambda^d
	.
\end{equation*}
Using \cref{thm:pushforward_dual},
we can finish with
\begin{equation*}
	\norm{ v }_{L^{\alpha,p}(\R^d)\dualspace}^2
	\le
	C \norm{ T \pushforward v }_{L^{\alpha,p}(\R^d)\dualspace}^2
	\le
	C
	\norm{v}_{L^\infty(\Omega)} \int_\Omega \abs{\bar\varphi v }\d\lambda^d
	.
\end{equation*}
Actually, we do not directly invoke \cref{thm:pushforward_dual}
in the proof, but we employ \eqref{eq:equivalent_dual_norms}.
Due to this, we also circumvent the invocation of the trace theorem.
Now, we make these ideas precise.

\begin{proof}[Proof of \cref{thm:main_inequality}]
	First, we argue that it is sufficient to consider non-negative $v$.
	Suppose that the desired inequality holds for all non-negative $v \in L^\infty(\Omega)$.
	A signed $v$ can be split into its positive and negative part
	$v = v^+ - v^-$.
	Consequently,
	\begin{align*}
		\frac{c}{2} \norm{v}_{L^{\alpha,p}(\R^d)\dualspace}^2
		&\le
		c \norm{v^+}_{L^{\alpha,p}(\R^d)\dualspace}^2
		+
		c \norm{v^-}_{L^{\alpha,p}(\R^d)\dualspace}^2
		\\&
		\le
		\norm{v^+}_{L^\infty(\Omega)}
		\int_\Omega \abs{\bar\varphi v^+} \d\lambda^d
		+
		\norm{v^-}_{L^\infty(\Omega)}
		\int_\Omega \abs{\bar\varphi v^-} \d\lambda^d
		\\&
		\le
		\norm{v}_{L^\infty(\Omega)}
		\parens*{
		\int_\Omega \abs{\bar\varphi v^+} \d\lambda^d
		+
		\int_\Omega \abs{\bar\varphi v^-} \d\lambda^d
		}
		=
		\norm{v}_{L^\infty(\Omega)}
		\int_\Omega \abs{\bar\varphi v} \d\lambda^d
		.
	\end{align*}
	This shows that the inequality holds for all $v \in L^\infty(\Omega)$
	(with a smaller constant).

	Now, let $v \in L^\infty(\Omega)$ be non-negative.
	W.l.o.g., we can assume $\norm{v}_{L^\infty(\Omega)} \le 1$.
	We extend $v$ by $0$ to all of $\R^d$.

	We begin with a local argument.
	Let a point $z \in \set{\bar\varphi = 0} \cap \bar\Omega$ with $\partial_d \bar\varphi(z) > 0$
	be given.
	Due to the implicit function theorem,
	there exist
	an open set $B \subset \R^{d-1}$,
	an open interval $I = (a,b) \subset \R$,
	a function $\gamma \in C^1(B; I)$
	and a constant $\nu \in (0,1]$
	such that
	\begin{align*}
		& z \in B \times I
		, \qquad
		\partial_d \bar\varphi(x,y) \ge \nu \quad\forall (x,y) \in B \times I
		\\
		& \set{\bar\varphi = 0} \cap (B \times I) = \set{(x, \gamma(x)) \given x \in B},
		\\ &
		\set{(x,y) \given x \in B, \abs{y - \gamma(x)} < \nu } \subset B \times I
		.
	\end{align*}
	Note that this implies
	$\bar\varphi(x,a) \le -\nu^2$ and $\bar\varphi(x,b) \ge \nu^2$
	for all $x \in B$.
	We define $\delta \coloneqq \nu^2/2$.
	Assume $v = 0$ on $\R^d \setminus (B \times I)$.
	We define $\tilde v \in L^\infty(B)$
	via
	\begin{equation}
		\label{eq:tilde_v}
		\tilde v(x)
		\coloneqq
		\int_a^b v(x,y) \d\lambda^1(y)
		.
	\end{equation}

	For almost all $x \in B$, we can invoke \cref{lem:one-dimensional}
	and obtain
	\begin{equation*}
		\norm{v}_{L^\infty(B \times I)}
		\int_a^b \abs{ \bar\varphi(x,y) v(x,y) } \d\lambda^1(y)
		\ge
		c
		\parens*{
			\int_a^b v(x,y) \d\lambda^1(y)
		}^2
		=
		c
		\tilde v(x)^2
		.
	\end{equation*}
	By integration, we get
	\begin{equation}
		\label{eq:ineq_tilde_v}
		\norm{v}_{L^\infty(B \times I)}
		\int_{B \times I} \abs{ \bar\varphi v } \d\lambda^d
		\ge
		c
		\norm{\tilde v}_{L^2(B)}^2
		.
	\end{equation}
	Next, we identify $v$ with the measure on $\R^d$
	which has $v$ as a density w.r.t.\ the Lebesgue measure $\lambda^d$.
	Similarly, we view $\tilde v$ as a measure on $\R^{d-1}$.
	An application of \eqref{eq:equivalent_dual_norms}
	yields
	\begin{equation*}
		c \norm{v}_{L^{\alpha,p}(\R^d)\dualspace}^{p'}
		\le
		\int_{B \times I}
		\int_0^1
		\parens*{
			\frac{ \int_{B_r(x,y)} v \d\lambda^d }{r^{d - \alpha p}}
		}^{p' - 1}
		\frac{\d\lambda^1(r)}{r}
		v(x,y) \d\lambda^d(x,y)
		.
	\end{equation*}
	Here, $B_r(x,y)$ is a ball in $\R^d$.
	Due to
	\begin{equation*}
		B_r(x,y) \cap (B \times I) \subset B_r(x) \times I,
	\end{equation*}
	where $B_r(x)$ is a ball in $\R^{d-1}$,
	and since $v$ is supported on $B \times I$,
	we have
	\begin{equation*}
		\int_{B_r(x,y)} v \d\lambda^d
		\le
		\int_{B_r(x) \times I} v \d\lambda^d
		=
		\int_{B_r(x)} \tilde v \d\lambda^{d-1}
		.
	\end{equation*}
	We insert this above and get
	\begin{equation*}
		c \norm{v}_{L^{\alpha,p}(\R^d)\dualspace}^{p'}
		\le
		\int_{B \times I}
		\int_0^1
		\parens*{
			\frac{ \int_{B_r(x)} \tilde v \d\lambda^{d-1} }{r^{d - \alpha p}}
		}^{p' - 1}
		\frac{\d\lambda^1(r)}{r}
		v(x,y) \d\lambda^d(x,y)
		.
	\end{equation*}
	Since the first part of the integrand no longer depends on $y$,
	we can continue with
	\begin{align*}
		c \norm{v}_{L^{\alpha,p}(\R^d)\dualspace}^{p'}
		&\le
		\int_{B}
		\int_0^1
		\parens*{
			\frac{ \int_{B_r(x)} \tilde v \d\lambda^{d-1} }{r^{d - \alpha p}}
		}^{p' - 1}
		\frac{\d\lambda^1(r)}{r}
		\tilde v(x) \d\lambda^{d-1}(x)
		\\
		&=
		\int_{B}
		\int_0^1
		\parens*{
			\frac{ \int_{B_r(x)} \tilde v \d\lambda^{d-1} }{r^{d - 1 - (\alpha - 1/p) p}}
		}^{p' - 1}
		\frac{\d\lambda^1(r)}{r}
		\tilde v(x) \d\lambda^{d-1}(x)
		.
	\end{align*}
	Note that $\alpha - 1/p > 0$ by assumption.
	Thus, we can employ
	\eqref{eq:equivalent_dual_norms} with $d$ and $\alpha$ replaced by $d - 1$ and $\alpha - 1/p$,
	respectively,
	to obtain
	\begin{equation*}
		c \norm{v}_{L^{\alpha,p}(\R^d)\dualspace}^{p'}
		\le
		C \norm{\tilde v}_{L^{\alpha-1/p,p}(\R^{d-1})\dualspace}^{p'}
		.
	\end{equation*}
	We have the Sobolev-type embedding
	\begin{equation*}
		L^{\alpha-1/p,p}(\R^{d-1})
		\embeds
		L^q(\R^{d-1})
	\end{equation*}
	for $q = (d-1) p / ( (d-1) - (\alpha - 1/p) p) = (d-1) p / (d - \alpha p)$
	in case $\alpha p < d$ and some arbitrary $q \in [2,\infty)$ for $\alpha p = d$,
	see \cite[2.8.4~Theorem and 2.8.6~Remark]{Ziemer1989}.
	The assumed lower bound on $p$ guarantees that $q \ge 2$.
	Consequently,
	the boundedness of $B \subset \R^{d-1}$ shows
	\begin{equation*}
		L^{\alpha-1/p,p}(\R^{d-1})
		\embeds
		L^q(\R^{d-1})
		\embeds
		L^2(B)
		.
	\end{equation*}
	Dualizing this embedding yields
	\begin{equation*}
		\norm{\tilde v}_{L^{\alpha-1/p,p}(\R^{d-1})\dualspace}
		\le
		C \norm{\tilde v}_{L^2(B)}
		.
	\end{equation*}
	Thus,
	\begin{equation*}
		c \norm{v}_{L^{\alpha,p}(\R^d)\dualspace}^{2}
		\le
		\norm{\tilde v}_{L^{\alpha-1/p,p}(\R^{d-1})\dualspace}^{2}
		\le
		C \norm{\tilde v}_{L^2(B)}^{2}
		\le
		C^2 \int_{B \times I} \abs{\bar\varphi v} \d\lambda^d
		\norm{v}_{L^\infty(B \times I)}
	\end{equation*}
	finishes the local step.
	We summarize its findings.
	Given a point $z \in \set{\bar\varphi = 0} \cap \bar\Omega$
	with $\partial_d \bar\varphi(z) > 0$,
	we find an open neighborhood  $W_z \subset \R^d$ of $z$
	such that all for all $v \in L^\infty(W_z)$ with $v \ge 0$
	we have
	\begin{equation*}
		\norm{v}_{L^{\alpha,p}(\R^d)\dualspace}^{2}
		\le
		C \int_{W_z} \abs{\bar\varphi v} \d\lambda^d
		\norm{v}_{L^\infty(W_z)}
		.
	\end{equation*}
	By working in rotated coordinate systems, 
	the same conclusion holds for all $z \in \set{\bar\varphi = 0} \cap \bar\Omega$,
	since we have assumed $\nabla\bar\varphi \ne 0$
	on this set.

	Now, we use a partition-of-unity argument to finish the proof.
	Due to compactness of $\bar\Omega$,
	we can find finitely many $z_1, \ldots, z_n \in \set{\bar\varphi = 0} \cap \bar\Omega$,
	such that
	\begin{equation*}
		\set{\bar\varphi = 0} \cap \bar\Omega
		\subset
		\bigcup_{k = 1}^n W_{z_k}
		\eqqcolon
		W
	\end{equation*}
	holds.
	This implies the existence of $\varepsilon > 0$ such that
	$\abs{\bar\varphi} \ge \varepsilon$ on the compact set $\bar\Omega \setminus W$.
	We define $W_0 \coloneqq \set{ \abs{\bar\varphi} > \varepsilon/2}$.
	Together with Hölder's inequality, we get
	\begin{equation*}
		\norm{v}_{L^2(\Omega)}^{2}
		\le
		C \int_{W_0} \abs{\bar\varphi v} \d\lambda^d
		\norm{v}_{L^\infty(W_0)}
	\end{equation*}
	for all $v \in L^\infty(W_0)$.
	Together with the Sobolev embedding
	$L^{\alpha,p}(\R^d) \embeds L^2(\R^d)$,
	this implies
	\begin{equation*}
		\norm{v}_{L^{\alpha,p}(\R^d)\dualspace}^{2}
		\le
		C \int_{W_0} \abs{\bar\varphi v} \d\lambda^d
		\norm{v}_{L^\infty(W_0)}
		.
	\end{equation*}

	Since
	$\bigcup_{k = 0}^n W_k$ is an open covering of $\bar\Omega$,
	there exist $\psi_k \in C_c(W_k; [0,1])$, $k = 0,\ldots, n$,
	such that $\sum_{k = 0}^n \psi_k = 1$ on $\Omega$.
	For an arbitrary $v \in L^\infty(\Omega)$ with $v \ge 0$, we now get
	\begin{align*}
		c \norm{v}_{L^{\alpha,p}(\R^d)\dualspace}^2
		&\le
		\sum_{k = 0}^n
		\norm{\psi_k v}_{L^{\alpha,p}(\R^d)\dualspace}^2
		\\&
		\le
		C
		\sum_{k = 0}^n
		\int_{W_k} \abs{\bar\varphi \psi_k v} \d\lambda^d
		\norm{v}_{L^\infty(W_k)}
		\le
		C
		\int_{\Omega} \abs{\bar\varphi v} \d\lambda^d
		\norm{v}_{L^\infty(\Omega)}
		.
	\end{align*}
	This finishes the proof.
\end{proof}
In \cref{thm:main_inequality}, the restriction
$p \le d/\alpha$ was needed
in order to apply \cref{thm:norm_in_dual_space}.
In the case $p > d/\alpha$,
we have
$g_\alpha \in L^{p'}(\R^d)$
for the conjugate exponent $p'$ of $p$.
Consequently,
\cite[Proposition~8.8]{Folland1999}
implies
$g_\alpha \cnv f \in C_0(\R^d)$
and we get the Sobolev embedding
$L^{\alpha,p}(\R^d) \embeds C_0(\R^d)$.
By dualizing this embedding,
we get
$C_0(\R^d)\dualspace = \MM(\R^d) \embeds L^{\alpha,p}(\R^d)\dualspace$.
Consequently,
\eqref{eq:growth_L1}
implies that \eqref{eq:main_inequality_bessel}
also holds in case $p > d/\alpha$.

We give some further comments concerning the restrictions
on $\alpha$ and $p$ in \cref{thm:main_inequality}.

First, we mention that the assumptions
can only be met in case $d > 1$,
since they imply $1 < \alpha p \le d$.
This is not a restriction since \eqref{eq:growth_L1}
is typically enough in dimension $d = 1$.

If one wants to work in the Hilbert space case,
one can always choose $p = 2$ and $\alpha > \frac12$.

Also the choice $\alpha = 1$ and $p = 2 d / (d + 1) < 2$ is possible.
It would be nice to see a proof of the corresponding inequality in \cref{thm:main_inequality}
without the use of the Bessel potential spaces and of \cref{thm:norm_in_dual_space}.
For this, it seems to be sufficient to show
\begin{align*}
	\norm{v}_{L^1(\Omega)}
	&\le
	\norm{\bar\varphi}_{L^1(\Omega)}
	+
	\int_\Omega \bar u (v - \bar\varphi) \d\lambda^d
	+
	\frac{C}{2} \norm{ v - \bar\varphi }_{W_0^{1,p}(\Omega)}^2
	\\&=
	\int_\Omega \bar u v \d\lambda^d
	+
	\frac{C}{2} \norm{ v - \bar\varphi }_{W_0^{1,p}(\Omega)}^2
\end{align*}
for all $v \in W_0^{1,p}(\Omega)$ in a neighborhood of $\bar\varphi$,
cf.\ \cite[Lemma~4.9]{WachsmuthWachsmuth2021}.
For this, it would be sufficient
to prove
\begin{equation*}
	\norm{
		\sign(v) - \sign(\bar\varphi)
	}_{W_0^{1,p}(\Omega)\dualspace}
	\le
	C \norm{ v - \bar\varphi }_{W_0^{1,p}(\Omega)}
\end{equation*}
locally.
One possible route to prove such inequalities would be
a strengthening of the results in
\cite{WachsmuthWachsmuth2025:1}.

\section{Application to optimal control}
\label{sec:application}

In this section,
we want to apply \cref{thm:main_inequality}
in order to prove second-order conditions for the control problem
\eqref{eq:P}.
To this end,
we review the derivation of second-order conditions via weak-$\star$ second subderivatives
in \cref{subsec:abstract_ssc}.
\cref{subsec:subderivatives}
is dedicated to the
computation of these subderivatives in dual Bessel spaces.
Finally, the second-order conditions for a bang-bang optimal control problem
are presented in \cref{subsec:optimal_control}.

\subsection{Abstract second-order conditions via subderivatives}
\label{subsec:abstract_ssc}
We start by recalling results concerning second-order optimality conditions
utilizing weak-$\star$ second subderivatives.
In infinite-dimensional spaces, this has been developed in the works
\cite{ChristofWachsmuth2017:1,WachsmuthWachsmuth2021,BorchardWachsmuth2023}.
We consider the optimization problem
\begin{equation}
	\label{eq:abstract_opt}
	\text{Minimize} \qquad F(x) + G(x)
	.
\end{equation}
We fix the assumptions.
\begin{assumption}
	\label{asm:abstract}
	We assume that the following is satisfied by the data in \eqref{eq:abstract_opt}.
	\begin{enumerate}
		\item
			\label{asm:abstract:1}
			$Y$ is a separable (real) Banach space with (topological) dual space $X$.
		\item
			\label{asm:abstract:2}
			$G \colon X \to (-\infty,\infty]$ and
			$F \colon \dom G \to \R$ are given functions,
			where we use $\dom G \coloneqq \set{ x \in X \given G(x) < \infty}$.
		\item
			\label{asm:abstract:3}
			At the point $\bar x \in \dom G$,
			there exist $F'(\bar x) \in Y$
			and a bounded bilinear form $F''(\bar x) \colon X \times X \to \R$
			such that
			\begin{equation}
				\label{eq:taylor_F}
				\lim_{k \to \infty}
				\frac{
					F(\bar x + t_k h_k)
					-
					F(\bar x)
					-
					t_k \dual{F'(\bar x)}{h_k}
					-
					\frac12 t_k^2 F''(\bar x) h_k^2
				}{
					t_k^2
				}
				=
				0
			\end{equation}
			holds for all sequences
			$\seq{t_k} \subset (0,\infty)$
			and
			$\seq{h_k} \subset X$
			such that
			$t_k \to 0$, $h_k \weaklystar h \in X$,
			and
			$\bar x + t_k h_k \in \dom G$.
		\item
			\label{asm:abstract:4}
			The map $h \mapsto F''(\bar x) h^2$
			is sequentially weak-$\star$
			continuous.
	\end{enumerate}
\end{assumption}
Here, we used the usual abbreviation
$F''(\bar x) h_k^2 \coloneqq F''(\bar x)[h_k, h_k]$.
We remark that the above mentioned contributions
also contain results which do not require the sequential weak-$\star$ continuity
of $h \mapsto F''(\bar x)h^2$.
However, for the problem under investigation, this assumption is satisfied
and, therefore, we restrict ourself to the situation that this regularity holds.

Note that we do not assume any form of smoothness of the functional $G$.
Instead, the curvature of $G$ will be measured by the following object.
\begin{definition}
	\label{def:second_subderivative}
	The weak-$\star$ second subderivative of $G$ at $\bar x \in \dom G$
	for $\varphi \in Y$ is denoted by
	$G''_X(\bar x, \varphi; \cdot) \colon X \to [-\infty,\infty]$
	and defined via
	\begin{equation*}
		G''_X(\bar x, \varphi; h)
		=
		\inf\set*{
			\liminf_{k \to \infty} \frac{G(\bar x + t_k h_k) - G(\bar x) - t_k \dual{\varphi}{h_k}}{t_k^2/2}
			\given
			t_k \to 0,
			h_k \weaklystar h \text{ in } X
		}
	\end{equation*}
	for all $h \in X$.
	To be precise, the infimum is taken over all sequences
	$\seq{t_k} \subset (0,\infty)$ and $\seq{h_k} \subset X$
	which satisfy $t_k \to 0$ and $h_k \weaklystar h$ in $X$.
\end{definition}
We also provide the following regularity condition
which ensures the existence of recovery sequences with certain properties.
\begin{definition}
	\label{def:epi-differentiable}
	We say that the functional $G$ is
	weak-$\star$ (strictly, strongly, respectively)
	second-order epi-differentiable
	at the point $\bar x \in \dom G$ for $\varphi \in Y$
	in direction $h \in X$,
	if for all $\seq{t_k} \subset (0,\infty)$
	with $t_k \to 0$, there exists a sequence $\seq{h_k} \subset X$
	such that
	$h_k \weaklystar h$
	($h_k \weaklystar h$ and $\norm{h_k}_X \to \norm{h}_X$, $h_k \to h$, respectively)
	and
	\begin{equation*}
		G''_X( \bar x, \varphi; h)
		=
		\lim_{k \to \infty}
		\frac{G(\bar x + t_k h_k) - G(\bar x) - t_k \dual{\varphi}{h_k}}{t_k^2/2}
		.
	\end{equation*}
\end{definition}
We mention that the second-order epi-differentiability
is not needed for the derivation of optimality conditions,
but plays a crucial role for the sensitivity analysis,
i.e., if one wants to differentiate the solution of an optimization problem
w.r.t.\ some parameters,
see \cite{ChristofWachsmuth2017:3}.

The main result concerning second-order conditions using second subderivatives
is the following one.
\begin{theorem}
	\label{thm:second-order_condition}
	Let \cref{asm:abstract} be satisfied.
	A quadratic growth condition for \eqref{eq:abstract_opt} at $\bar x$ holds,
	i.e., there exist $c, \varepsilon > 0$
	such that
	\begin{equation}
		\label{eq:QG}
		F(x) + G(x)
		\ge
		F(\bar x) + G(\bar x)
		+
		\frac{c}{2} \norm{x - \bar x}_X^2
		\qquad
		\forall x \in B^X_\varepsilon(\bar x),
	\end{equation}
	if and only if
	the second-order condition
	\begin{equation}
		\label{eq:ssc}
		F''(\bar x) h^2
		+
		G''_X(\bar x, -F'(\bar x); h)
		>
		0
		\qquad\forall h \in X \setminus \set{0}
	\end{equation}
	and the non-degeneracy condition (NDC)
	\begin{equation}
		\label{eq:NDC}
		\begin{aligned}
			&\text{for all $\seq{t_k} \subset (0,\infty)$, $\seq{h_k} \subset X$
				with $t_k \to 0$, $h_k \weaklystar 0$
			and $\norm{h_k}_X = 1$, we have}\\
			&\qquad
			\liminf_{k \to \infty} \parens[\bigg]{
				\frac1{t_k^2} \parens[\big]{G(\bar x + t_k h_k) - G(\bar x)}
				+
				\dual{F'(\bar x)}{h_k / t_k}
				+
				\frac12 F''(\bar x) h_k^2
			}
			> 0
		\end{aligned}
	\end{equation}
	hold.
\end{theorem}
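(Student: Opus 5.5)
Both directions will be proved by contradiction, with sequences $t_k \coloneqq \norm{x_k - \bar x}_X \to 0$ and $h_k \coloneqq (x_k - \bar x)/t_k$, so that $\norm{h_k}_X = 1$. Since $Y$ is separable, bounded sets in $X = Y\dualspace$ are weak-$\star$ sequentially compact (Banach--Alaoglu). Passing to a subsequence, we may therefore assume $h_k \weaklystar h$ for some $h \in X$.

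\emph{Sufficiency.} Assume \eqref{eq:ssc} and \eqref{eq:NDC}, and suppose \eqref{eq:QG} fails. Then there are $x_k \to \bar x$ with
\begin{equation*}
	F(x_k) + G(x_k) < F(\bar x) + G(\bar x) + \tfrac1k \norm{x_k - \bar x}_X^2 .
\end{equation*}
In particular $x_k \in \dom G$ and $x_k \ne \bar x$, so $t_k, h_k$ are defined as above. Dividing by $t_k^2$ and using the expansion \eqref{eq:taylor_F}, we obtain
\begin{equation*}
	\limsup_{k\to\infty} \parens[\Big]{ \frac{G(\bar x + t_k h_k) - G(\bar x) + t_k \dual{F'(\bar x)}{h_k}}{t_k^2} + \frac12 F''(\bar x) h_k^2 } \le 0 .
\end{equation*}
We distinguish two cases.
\begin{enumerate}
	\item If $h = 0$, this inequality directly contradicts \eqref{eq:NDC}.
	\item If $h \ne 0$, the weak-$\star$ continuity from \cref{asm:abstract}~\ref{asm:abstract:4} gives $F''(\bar x) h_k^2 \to F''(\bar x) h^2$. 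By \cref{def:second_subderivative} with $\varphi = -F'(\bar x)$, the $\liminf$ of the $G$-quotient is at least $\frac12 G''_X(\bar x, -F'(\bar x); h)$. Hence $F''(\bar x) h^2 + G''_X(\bar x, -F'(\bar x); h) \le 0$, contradicting \eqref{eq:ssc}.
\end{enumerate}

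\emph{Necessity.} Assume \eqref{eq:QG} with constants $c, \varepsilon$.

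For \eqref{eq:NDC}: take $t_k \to 0$, $h_k \weaklystar 0$ with $\norm{h_k}_X = 1$. For large $k$ we have $\bar x + t_k h_k \in B^X_\varepsilon(\bar x)$. If $G(\bar x + t_k h_k) = \infty$, the expression in \eqref{eq:NDC} is $+\infty$. Otherwise, \eqref{eq:QG} and \eqref{eq:taylor_F} (with limit direction $h = 0$) show that the bracket in \eqref{eq:NDC} is at least $\frac c2 + o(1)$. So its $\liminf$ is at least $c/2 > 0$.

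For \eqref{eq:ssc}: fix $h \ne 0$ and any admissible sequence $t_k \to 0$, $h_k \weaklystar h$ in \cref{def:second_subderivative}. We may assume the $\liminf$ there is finite, so along a subsequence $\bar x + t_k h_k \in \dom G$. By the uniform boundedness principle, $\norm{h_k}_X$ is bounded, so $\bar x + t_k h_k \in B^X_\varepsilon(\bar x)$ eventually. Then \eqref{eq:QG}, \eqref{eq:taylor_F} and \ref{asm:abstract:4} give
\begin{equation*}
	\liminf_{k\to\infty} \frac{G(\bar x + t_k h_k) - G(\bar x) + t_k \dual{F'(\bar x)}{h_k}}{t_k^2/2} + F''(\bar x) h^2 \ge c \liminf_{k\to\infty} \norm{h_k}_X^2 \ge c \norm{h}_X^2 ,
\end{equation*}
where the last step uses weak-$\star$ lower semicontinuity of the norm. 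Taking the infimum over all admissible sequences yields $F''(\bar x) h^2 + G''_X(\bar x, -F'(\bar x); h) \ge c\norm{h}_X^2 > 0$.

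\emph{Main obstacle.} The delicate step is the bookkeeping of where \eqref{eq:taylor_F} may be applied: it only holds along sequences in $\dom G$. Sequences leaving $\dom G$ must be handled by noting that the corresponding quotients are $+\infty$. A second point is the boundedness of $\seq{h_k}$ in the necessity part, which is needed to stay inside $B^X_\varepsilon(\bar x)$ and is supplied by the uniform boundedness principle.
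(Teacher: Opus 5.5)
Your proof is correct. The paper does not prove this theorem itself but cites \cite[Theorem~2.20]{BorchardWachsmuth2023}, and your argument matches the standard argument behind that result: you normalize $h_k = (x_k-\bar x)/\norm{x_k-\bar x}_X$, extract a weak-$\star$ convergent subsequence using separability of $Y$, and contradict \eqref{eq:NDC} if $h=0$ and \eqref{eq:ssc} if $h\ne 0$; for necessity you argue directly from \eqref{eq:QG}, \eqref{eq:taylor_F}, weak-$\star$ continuity of $F''(\bar x)$ and weak-$\star$ lower semicontinuity of the norm. The only care needed is the point you already flag, namely applying \eqref{eq:taylor_F} only along subsequences lying in $\dom G$.
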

For the proof, we refer to \cite[Theorem~2.20]{BorchardWachsmuth2023}.
In \eqref{eq:QG},
$B^X_\varepsilon(\bar x)$ denotes a closed ball in the space $X$.
We also mention that \eqref{eq:ssc}
with ``$>$'' replaced by ``$\ge$'' is a necessary optimality condition.

Under \itemref{asm:abstract:4}, it is straightforward to check that the NDC \eqref{eq:NDC}
is equivalent to
\begin{equation}
	\label{eq:NDC2}
	\begin{aligned}
		&\text{for all $\seq{t_k} \subset (0,\infty)$, $\seq{h_k} \subset X$
			with $t_k \to 0$, $h_k \weaklystar 0$
		and $\norm{h_k}_X = 1$, we have}\\
		&\qquad
		\liminf_{k \to \infty} \parens[\bigg]{
			\frac1{t_k^2} \parens[\big]{G(\bar x + t_k h_k) - G(\bar x)}
			+
			\dual{F'(\bar x)}{h_k / t_k}
		}
		> 0
		.
	\end{aligned}
\end{equation}

Finally, we mention a key result for the verification of NDC,
cf.\ \cite[Lemma~2.12]{WachsmuthWachsmuth2021}.
\begin{lemma}
	\label{lem:verification_NDC}
	Let \cref{asm:abstract} be satisfied.
	We assume that there exist constants $c, \varepsilon > 0$ such that
	\begin{equation}
		\label{eq:growth_for_NDC}
		G(x) - G(\bar x) + \dual{F'(\bar x)}{x - \bar x}
		\ge
		\frac{c}{2} \norm{x - \bar x}_X^2
		\qquad\forall
		x \in B^X_\varepsilon(\bar x)
	\end{equation}
	holds.
	Then, \eqref{eq:NDC} and \eqref{eq:NDC2} are satisfied.
\end{lemma}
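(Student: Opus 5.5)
The plan is to argue by contradiction. Assume that \eqref{eq:growth_for_NDC} holds, and that there are sequences $t_k \to 0$, $h_k \weaklystar 0$ with $\norm{h_k}_X = 1$ for which the $\liminf$ in \eqref{eq:NDC2} is $\le 0$. The main difficulty is that $\bar x + t_k h_k$ need not lie in the ball $B^X_\varepsilon(\bar x)$, since $t_k$ is only small and not bounded by $\varepsilon$ a priori. Because $t_k \to 0$, however, $\norm{t_k h_k}_X = t_k \le \varepsilon$ for all sufficiently large $k$, so after discarding finitely many terms we have $\bar x + t_k h_k \in B^X_\varepsilon(\bar x)$.

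We then insert $x = \bar x + t_k h_k$ into \eqref{eq:growth_for_NDC} and divide by $t_k^2$. This gives
\begin{equation*}
	\frac{1}{t_k^2}\parens[\big]{G(\bar x + t_k h_k) - G(\bar x)}
	+
	\dual{F'(\bar x)}{h_k / t_k}
	\ge
	\frac{c}{2} \norm{h_k}_X^2
	=
	\frac{c}{2}
\end{equation*}
for all large $k$. Hence the $\liminf$ in \eqref{eq:NDC2} is at least $c/2 > 0$, which is the desired contradiction. In fact no contradiction argument is needed: the estimate gives \eqref{eq:NDC2} directly, for every admissible pair of sequences. The case $G(\bar x + t_k h_k) = \infty$ is harmless, since then the left-hand side is $+\infty$.

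Finally, \eqref{eq:NDC} follows from \eqref{eq:NDC2}. The two conditions differ only by the term $\frac12 F''(\bar x) h_k^2$. By \itemref{asm:abstract:4} and $h_k \weaklystar 0$, this term converges to $\frac12 F''(\bar x) 0^2 = 0$. Adding a null sequence does not change the $\liminf$, so the bound $\ge c/2$ carries over to \eqref{eq:NDC}.

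The only point needing care is the equivalence \eqref{eq:NDC} $\Leftrightarrow$ \eqref{eq:NDC2} through weak-$\star$ continuity, and that is immediate.
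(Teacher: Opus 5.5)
Your proposal is correct and is the straightforward argument the paper has in mind; the paper omits the proof. For large $k$ you have $\bar x + t_k h_k \in B^X_\varepsilon(\bar x)$, so dividing \eqref{eq:growth_for_NDC} by $t_k^2$ bounds the expression in \eqref{eq:NDC2} below by $c/2$, and \itemref{asm:abstract:4} gives $F''(\bar x) h_k^2 \to 0$, which transfers this bound to \eqref{eq:NDC}; the opening contradiction setup is superfluous, as you note yourself.
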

The proof is straightforward.

\subsection{Second subderivatives in dual Bessel potential spaces}
\label{subsec:subderivatives}
In the works
\cite{ChristofWachsmuth2017:1,WachsmuthWachsmuth2021},
the abstract theory from \cref{subsec:abstract_ssc}
has been successfully applied
to optimal control problems with bang-bang controls.
Therein, the starting point is the growth inequality
\eqref{eq:main_inequality}.
Indeed, in the setting of a box-constrained optimal control problem,
the functional $G$ is the indicator function of the admissible set $\Uad$
and the adjoint state $\bar\varphi$ coincides with the derivative $F'(\bar u)$.
Consequently,
the inequality \eqref{eq:main_inequality} coincides with \eqref{eq:growth_for_NDC}
for the choice $X = L^1(\Omega)$.
However, since $L^1(\Omega)$ is not a dual space,
one uses the space of measures $X = \MM(\Omega) = C_0(\Omega)\dualspace$.
Here, the isometric embedding of $L^1(\Omega)$ in $\MM(\Omega)$ is crucial.

This choice of $X$ has a significant disadvantage.
Indeed, in order to satisfy \itemref{asm:abstract:3},
we basically need that the linearized control-to-state operator
maps from the space $X$ to $L^2(\Omega)$.
In case of a semilinear and elliptic PDE,
this limits the dimension of the underlying domain to $d \le 3$.
Some more details will be given in \cref{subsec:optimal_control}.

In \cref{thm:main_inequality},
we have proved the growth inequality \eqref{eq:main_inequality_bessel}
and, therefore,
we can now invoke the theory of \cref{subsec:abstract_ssc}
in the dual space $L_0^{\alpha,p}(\Omega)\dualspace$.
The actual computation of the weak-$\star$ second subderivatives in
\cite{ChristofWachsmuth2017:1,WachsmuthWachsmuth2021}
is very laborious
and one might fear that complicated arguments are needed
to compute the subderivatives in the space $L_0^{\alpha,p}(\Omega)\dualspace$.
As we will see in \cref{thm:equal_subderivatives} below,
this is actually not the case
and we can benefit from the already known subderivative in the space $\MM(\Omega)$.

In what follows,
we consider two settings.
The first one is the traditional choice
\begin{equation*}
	Y = C_0(\Omega),
	\qquad
	X = C_0(\Omega)\dualspace = \MM(\Omega)
\end{equation*}
and the second one is
\begin{equation*}
	Y = L_0^{\alpha,p}(\Omega),
	\qquad
	X = L_0^{\alpha,p}(\Omega)\dualspace
	.
\end{equation*}
In both cases,
the space $L^\infty(\Omega)$ is continuously embedded in $X = Y\dualspace$
where the embedding is defined by duality via
\begin{equation*}
	\dual{u}{\varphi}
	\coloneqq
	\int_\Omega u \varphi \d\lambda^d
	\qquad
	\forall u \in L^\infty(\Omega), \varphi \in Y.
\end{equation*}
Consequently, the feasible set $\Uad$ of \eqref{eq:P}
can be understood as a subset of $X$.
We consider its indicator function, i.e.,
the functional $G \colon X \to [0,\infty]$,
\begin{equation*}
	G(u)
	\coloneqq
	\begin{cases}
		0 & \text{if } u \in L^\infty(\Omega) \text{ and } \abs{u} \le 1 \text{ a.e.}, \\
		+\infty & \text{else}.
	\end{cases}
\end{equation*}
We investigate the weak-$\star$ second subderivative of $G$
from \cref{def:second_subderivative}
for the above choices of $X$.

For the remaining part of this section,
let $\bar u \in \Uad$ and $\bar\varphi \in C^1(\R^d)$ be given
such that
\cref{asm:standing}
and \eqref{eq:FONC} hold.
Further, we assume that $\bar\varphi |_{\Omega} \in C_0(\Omega)$.
Note that \eqref{eq:FONC} is equivalent to $\bar\varphi \in \partial G(\bar u)$,
where $\partial G$ denotes the convex subdifferential of $G$,
and to $\bar u = \sign(\bar\varphi)$.
This directly implies $G''_X(\bar u, \bar\varphi; h) \ge 0$
for all $h \in X$,
see \cite[Lemma~2.4]{WachsmuthWachsmuth2021}.

From
\cite[Theorem~6.11]{ChristofWachsmuth2017:1}
or
\cite[Theorem~4.13]{WachsmuthWachsmuth2021},
we get
\begin{equation*}
	G''_{\MM(\Omega)}(\bar u, \bar\varphi; h)
	=
	\begin{cases}
		\displaystyle\int_{\set{\bar\varphi = 0} \cap \Omega}
		\frac{\abs{\nabla\bar\varphi}}{2}
		\parens*{
			\frac{\d h}{\d\HH^{d-1} |_{\set{\bar\varphi = 0} \cap \Omega} }
		}^2
		\d\HH^{d-1}
		& \text{if } h \ll \HH^{d-1} |_{\set{\bar\varphi = 0} \cap \Omega }
		,
		\\
		+\infty & \text{else}
	\end{cases}
\end{equation*}
for all $h \in \MM(\Omega)$.
Here,
$h \ll \HH^{d-1} |_{\set{\bar\varphi = 0} \cap \Omega }$
means that
$h$ is absolutely continuous w.r.t.\ $\HH^{d-1} |_{\set{\bar\varphi = 0} \cap \Omega }$
and
$\d h / \d\HH^{d-1} |_{\set{\bar\varphi = 0} \cap \Omega}$
denotes the corresponding Radon--Nikodým derivative.
By \cref{asm:standing}, $\abs{\nabla\varphi}$
is uniformly positive and bounded on $\set{\bar\varphi = 0} \cap \Omega$.
Consequently, the formula for the subderivative can be written as
\begin{equation*}
	G''_{\MM(\Omega)}(\bar u, \bar\varphi; h)
	=
	\begin{cases}
		\displaystyle\int_{\set{\bar\varphi = 0} \cap \Omega}
		\frac{\abs{\nabla\bar\varphi}}{2}
		h^2
		\d\HH^{d-1}
		& \text{if } h \in L^2(\HH^{d-1} | _{\set{\bar\varphi = 0} \cap \Omega })
		,
		\\
		+\infty & \text{else},
	\end{cases}
\end{equation*}
where we identify
$L^2(\HH^{d-1} | _{\set{\bar\varphi = 0} \cap \Omega })$
with a subset of $\MM(\Omega)$ via the map
$\dual{h}{\varphi} \coloneqq \int_{\set{\bar\varphi = 0} \cap \Omega} h \varphi \d\HH^{d-1}$
for $h \in L^2(\HH^{d-1} | _{\set{\bar\varphi = 0} \cap \Omega })$
and $\varphi \in C_0(\Omega)$.
Moreover,
the above references also provide us with the strict second-order epi-differentiability
of $G$ in the space $X = \MM(\Omega)$
at $\bar u$ for $\bar\varphi$ for all directions $h \in \MM(\Omega)$.

In order to apply the abstract theory with $Y = L_0^{\alpha,p}(\Omega)$,
we need to ensure that $\bar\varphi$ belongs to this space.
From $\bar\varphi \in C^1(\R^d)$ and $\bar\varphi|_\Omega \in C_0(\Omega)$,
we get $\bar\varphi|_\Omega \in W_0^{1,p}(\Omega)$.
Consequently, we get
$\bar\varphi|_\Omega \in W_0^{1,p}(\Omega) = L_0^{1,p}(\Omega) \embeds L_0^{\alpha,p}(\Omega)$
for all $\alpha \in (0,1]$.
With a small abuse of notation, we do not distinguish
between $\bar\varphi$ and $\bar\varphi|_\Omega \in L_0^{\alpha,p}(\Omega)$.

Further, we mention that the assumptions on $\alpha$ and $p$ from \cref{thm:main_inequality}
gives rise to a trace operator from
$L_0^{\alpha,p}(\Omega)$
to
$L^2( \HH^{d-1} |_{\set{\bar\varphi = 0} \cap \Omega})$,
cf.\ \cite{Schneider2011}
and \cite[Corollary~8.3.2]{Schneider2021}.
Consequently,
we can identify
$L^2( \HH^{d-1} |_{\set{\bar\varphi = 0} \cap \Omega})$
with a subspace of $L_0^{\alpha,p}(\Omega)\dualspace$
via
$\dual{h}{\varphi} \coloneqq \int_{\set{\bar\varphi = 0} \cap \Omega} h \varphi \d\HH^{d-1}$
for all
$h \in L^2( \HH^{d-1} |_{\set{\bar\varphi = 0} \cap \Omega})$
and
$\varphi \in L_0^{\alpha,p}(\Omega)$.
\begin{theorem}
	\label{thm:equal_subderivatives}
	Let \cref{asm:standing} be satisfied by $\Omega$ and $\bar\varphi$.
	Additionally, we assume $\bar\varphi|_\Omega \in C_0(\Omega)$.
	We set $\bar u = \sign(\bar\varphi)$.
	Further, let $\alpha$, $p$ satisfy the requirements from \cref{thm:main_inequality}
	and, additionally, $\alpha \le 1$.
	Then,
	\begin{equation*}
		G''_{L_0^{\alpha,p}(\Omega)\dualspace}(\bar u, \bar\varphi; h)
		=
		\begin{cases}
			\displaystyle\int_{\set{\bar\varphi = 0} \cap \Omega}
			\frac{\abs{\nabla\bar\varphi}}{2}
			h^2
			\d\HH^{d-1}
			& \text{if } h \in L^2( \HH^{d-1} |_{\set{\bar\varphi = 0} \cap \Omega}),
			\\
			+\infty & \text{otherwise}.
		\end{cases}
	\end{equation*}
	for all $h \in L_0^{\alpha,p}(\Omega)\dualspace$.
	Here,
	we identify
	$L^2( \HH^{d-1} |_{\set{\bar\varphi = 0} \cap \Omega})$
	with a subspace of $L_0^{\alpha,p}(\Omega)\dualspace$
	as mentioned above.

	Moreover,
	the functional $G$ is weak-$\star$
	second-order epi-differentiable
	in the space $X = L_0^{\alpha,p}(\Omega)\dualspace$
	at $\bar u$ for $\bar\varphi$
	in all directions $h \in L_0^{\alpha,p}(\Omega)\dualspace$.
\end{theorem}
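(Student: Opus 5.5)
Write $X_1 \coloneqq \MM(\Omega)$ and $X_2 \coloneqq L_0^{\alpha,p}(\Omega)\dualspace$, with preduals $Y_1 = C_0(\Omega)$ and $Y_2 = L_0^{\alpha,p}(\Omega)$. The plan is to compare the two subderivatives through the embeddings between these spaces, and then reuse the known formula for $G''_{\MM(\Omega)}$ and the strict epi-differentiability in $\MM(\Omega)$. In the difference quotients only elements $h_k = (u_k - \bar u)/t_k$ with $u_k \in \Uad$ matter, since otherwise $G = \infty$. For such $h_k$ the quotient
\[
\frac{G(\bar u + t_k h_k) - G(\bar u) - t_k \dual{\bar\varphi}{h_k}}{t_k^2/2} = -\frac{2}{t_k^2}\int_\Omega \bar\varphi (u_k - \bar u)\d\lambda^d
\]
is the same number in both settings, because $\bar\varphi \in Y_1 \cap Y_2$ and both pairings are the $L^2$ integral. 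Hence only the notions of weak-$\star$ convergence $h_k \weaklystar h$ differ. Note that $-\bar\varphi = F'$ under the sign convention; I will keep the sign consistent with $\bar u = \sign(\bar\varphi)$.

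\textbf{Step 1 ($\le$, recovery sequences).} Let $h \in L^2(\HH^{d-1}|_{\set{\bar\varphi = 0}\cap\Omega}) \subset X_1$. By the strict epi-differentiability in $\MM(\Omega)$, for every $t_k \to 0$ there exist $h_k = (u_k - \bar u)/t_k \weaklystar h$ in $\MM(\Omega)$ with $\norm{h_k}_{\MM} \to \norm{h}_{\MM}$ that attain the value. I need $h_k \weaklystar h$ in $X_2$. The quotient values converge, hence are bounded, and \cref{thm:main_inequality} then gives
\[
\norm{h_k}_{X_2}^2 \le C\, \norm{u_k - \bar u}_{L^\infty}\, t_k^{-2}\int_\Omega \bar\varphi(u_k - \bar u) \le C .
\]
This uses $\norm{v}_{L_0^{\alpha,p}(\Omega)\dualspace} \le \norm{v}_{L^{\alpha,p}(\R^d)\dualspace}$ for $v$ supported in $\Omega$. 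So $(h_k)$ is bounded in $X_2$. On the dense subset $C_c^\infty(\Omega) \subset Y_1 \cap Y_2$ we have $\dual{h_k}{\psi} \to \dual{h}{\psi}$. Boundedness together with density gives $h_k \weaklystar h$ in $X_2$, where $h$ acts on $Y_2$ through the trace identification; the two actions agree on $C_c^\infty$ and both are continuous. For $h \in X_2$ outside $L^2$ there is nothing to show. This yields $G''_{X_2} \le$ the formula, together with recovery sequences, i.e.\ weak-$\star$ epi-differentiability in $X_2$.

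\textbf{Step 2 ($\ge$, lower bound).} Let $t_k \to 0$ and $h_k \weaklystar h$ in $X_2$ with finite liminf, and pass to a subsequence realizing the liminf. The difficulty is that $h_k$ need not be bounded in $\MM(\Omega)$, since $\norm{h_k}_{L^1}$ could blow up. I use \eqref{eq:growth_L1}: with $v = u_k - \bar u$ and $\norm{v}_\infty \le 2$,
\[
\norm{h_k}_{L^1}^2 \le C\, t_k^{-2}\int_\Omega \bar\varphi(u_k - \bar u),
\]
which is bounded. Hence $(h_k)$ is bounded in $\MM(\Omega)$, and a subsequence converges weak-$\star$ in $\MM(\Omega)$ to some $\mu$. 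Testing with $C_c^\infty(\Omega)$ shows that $\mu$ and $h$ coincide as distributions. By the $\MM$-definition, the liminf is at least $G''_{\MM}(\bar u,\bar\varphi;\mu)$. If this is finite, then $\mu \in L^2(\HH^{d-1}|\ldots)$, and $h = \mu$ under the identification, by density of $C_c^\infty(\Omega)$ in $Y_2$. If $h$ is not of this form, then $\mu \notin L^2$, the value is $+\infty$, and the liminf was infinite anyway. This gives the formula.

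\textbf{Main obstacle.} The delicate points are the compatibility of identifications: that a measure $\mu \in \MM(\Omega) \cap X_2$ agreeing with $h$ on $C_c^\infty(\Omega)$ is the same element of $X_2$, and that $L^2$ densities act on $Y_2$ by traces consistent with the measure action. I would handle both via density of $C_c^\infty(\Omega)$ in $L_0^{\alpha,p}(\Omega)$ and continuity of the trace. The assumption $\alpha \le 1$ is used only to ensure $\bar\varphi \in Y_2$.
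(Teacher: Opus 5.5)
Your proposal is correct and follows the paper's proof essentially step by step, with the two halves in the opposite order. The lower bound uses \eqref{eq:growth_L1} to get $L^1$-boundedness, then weak-$\star$ compactness in $\MM(\Omega)$ and identification of the limit through $C_c^\infty(\Omega)$; the recovery sequences come from $\MM(\Omega)$ and are shown to be bounded in $L_0^{\alpha,p}(\Omega)\dualspace$ via \cref{thm:main_inequality}. One cosmetic slip: in both displayed bounds the integral should read $\int_\Omega \abs{\bar\varphi(u_k-\bar u)}\d\lambda^d = -\int_\Omega \bar\varphi(u_k-\bar u)\d\lambda^d$, because $\bar u = \sign(\bar\varphi)$ forces $\int_\Omega\bar\varphi(u_k-\bar u)\d\lambda^d \le 0$.
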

\begin{proof}
	We show
	\begin{subequations}
		\label{eq:partial_statement_subderivative}
		\begin{align}
			\label{eq:partial_statement_subderivative:1}
			G''_{L_0^{\alpha,p}(\Omega)\dualspace}(\bar u, \bar\varphi; h)
			&=
			G''_{\MM(\Omega)}(\bar u, \bar\varphi; h)
			&&
			\forall h \in L^2( \HH^{d-1} |_{\set{\bar\varphi = 0} \cap \Omega})
			,
			\\
			\label{eq:partial_statement_subderivative:2}
			G''_{L_0^{\alpha,p}(\Omega)\dualspace}(\bar u, \bar\varphi; h)
			&=
			+\infty
			&&
			\forall h \in L_0^{\alpha,p}(\Omega)\dualspace \setminus L^2( \HH^{d-1} |_{\set{\bar\varphi = 0} \cap \Omega})
			.
		\end{align}
	\end{subequations}
	Due to the formula for $G''_{\MM(\Omega)}(\bar u, \bar\varphi; h)$
	from
	\cite[Theorem~6.11]{ChristofWachsmuth2017:1}
	or
	\cite[Theorem~4.13]{WachsmuthWachsmuth2021},
	this is sufficient to verify the formula for the weak-$\star$ second subderivative.

	Let a direction $h \in L_0^{\alpha,p}(\Omega)\dualspace$
	with $G''_{L_0^{\alpha,p}(\Omega)\dualspace}(\bar u, \bar \varphi; h) < \infty$ be given.
	By definition of the second subderivative,
	for every $\varepsilon > 0$, we get the existence of sequences
	$t_k \to 0$, $h_k \weaklystar h$
	such that
	\begin{equation*}
		G''_{L_0^{\alpha,p}(\Omega)\dualspace}(\bar u, \bar \varphi; h)
		+
		\varepsilon
		\ge
		\lim_{k \to \infty} \frac{G(\bar u + t_k h_k) - G(\bar u) - t_k \dual{\bar \varphi}{h_k}}{t_k^2/2}
		.
	\end{equation*}
	This shows that the sequence $\set{\bar u + t_k h_k}$ belongs to $\Uad$.
	In particular,
	$\seq{h_k} \subset L^\infty(\Omega)$,
	and
	\begin{equation*}
		G''_{L_0^{\alpha,p}(\Omega)\dualspace}(\bar u, \bar \varphi; h)
		+
		\varepsilon
		\ge
		\lim_{k \to \infty} \frac{- t_k \dual{\bar \varphi}{h_k}}{t_k^2/2}
		.
	\end{equation*}
	Combining $\bar u + t_k h_k \in \Uad$
	with $\bar u = \sign(\bar\varphi)$,
	we get $\bar\varphi h_k \le 0$ a.e.\ in $\Omega$,
	i.e.,
	\begin{equation*}
		G''_{L_0^{\alpha,p}(\Omega)\dualspace}(\bar u, \bar \varphi; h)
		+
		\varepsilon
		\ge
		\lim_{k \to \infty} \frac{2}{t_k^2} \int_\Omega \abs{\bar\varphi t_k h_k} \d\lambda^d
		.
	\end{equation*}
	Due to $\norm{t_k h_k}_{L^\infty(\Omega)} \le 2$,
	we can apply \eqref{eq:growth_L1}
	and this shows
	\begin{equation*}
		\infty
		>
		G''_{L_0^{\alpha,p}(\Omega)\dualspace}(\bar u, \bar \varphi; h)
		+
		\varepsilon
		\ge
		\lim_{k \to \infty} \frac{2 c}{t_k^2} \norm{t_k h_k}_{L^1(\Omega)}^2
		.
	\end{equation*}
	Hence, the sequence $\seq{h_k}$ is bounded in $L^1(\Omega)$ and w.l.o.g.\ we can assume
	$h_k \weaklystar \tilde h$ in $\MM(\Omega)$ for some $\tilde h \in \MM(\Omega)$.
	For all test functions $\psi \in C_c^\infty(\Omega)$,
	we have
	\begin{equation*}
		\dual{\tilde h}{\psi}_{C_0(\Omega)}
		=
		\lim_{k \to \infty} \dual{h_k}{\psi}_{C_0(\Omega)}
		=
		\lim_{k \to \infty} \dual{h_k}{\psi}_{L_0^{\alpha,p}(\Omega)}
		=
		\dual{h}{\psi}_{L_0^{\alpha,p}(\Omega)}
		.
	\end{equation*}
	Since $C_c^\infty(\Omega)$ is dense in $C_0(\Omega)$ and $L_0^{\alpha,p}(\Omega)$,
	this shows $\tilde h = h$.
	Further,
	\begin{equation*}
		\infty
		>
		G''_{L_0^{\alpha,p}(\Omega)\dualspace}(\bar u, \bar \varphi; h)
		+
		\varepsilon
		\ge
		\lim_{k \to \infty} \frac{2}{t_k^2} \int_\Omega \abs{\bar\varphi t_k h_k} \d\lambda^d
		\ge
		G''_{\MM(\Omega)}(\bar u, \bar \varphi; h)
	\end{equation*}
	by definition of the subderivative in $\MM(\Omega)$.
	With $\varepsilon \to 0$,
	\begin{equation*}
		G''_{L_0^{\alpha,p}(\Omega)\dualspace}(\bar u, \bar \varphi; h)
		\ge
		G''_{\MM(\Omega)}(\bar u, \bar \varphi; h)
	\end{equation*}
	follows
	for all $h \in L_0^{\alpha,p}(\Omega)$ with
	$G''_{L_0^{\alpha,p}(\Omega)\dualspace}(\bar u, \bar \varphi; h) < \infty$.
	In particular, this shows
	$h \in L^2( \HH^{d-1} |_{\set{\bar\varphi = 0} \cap \Omega})$.
	Consequently,
	\eqref{eq:partial_statement_subderivative:2} follows.

	Now, let $h \in L^2( \HH^{d-1} |_{\set{\bar\varphi = 0} \cap \Omega})$
	be given.
	From the weak-$\star$ twice epi-differentiability in $\MM(\Omega)$,
	for every sequence $\seq{t_k} \subset (0,\infty)$ with $t_k \to 0$,
	we get a sequence $\seq{h_k} \subset \MM(\Omega)$ with $h_k \weaklystar h$ in $\MM(\Omega)$
	such that
	\begin{equation*}
		G''_{\MM(\Omega)}(\bar u, \bar \varphi; h)
		=
		\lim_{k \to \infty} \frac{G(\bar u + t_k h_k) - G(\bar u) - t_k \dual{\bar \varphi}{h_k}}{t_k^2/2}
		.
	\end{equation*}
	Now we can argue as above to show
	\begin{equation*}
		\infty
		>
		G''_{\MM(\Omega)}(\bar u, \bar \varphi; h)
		\ge
		\lim_{k \to \infty} \frac{2 c}{t_k^2} \norm{t_k h_k}_{L^{\alpha,p}(\R^d)\dualspace}^2
		\ge
		\lim_{k \to \infty} \frac{2 c}{t_k^2} \norm{t_k h_k}_{L_0^{\alpha,p}(\Omega)\dualspace}^2
	\end{equation*}
	via \cref{thm:main_inequality}.
	Consequently, $h_k$ is bounded in $L_0^{\alpha,p}(\Omega)\dualspace$
	and $h_k \weaklystar h$ in $L_0^{\alpha,p}(\Omega)\dualspace$ follows.
	The definition of the weak-$\star$ second subderivatives yields
	\begin{equation*}
		G''_{\MM(\Omega)}(\bar u, \bar \varphi; h)
		=
		\lim_{k \to \infty} \frac{G(\bar u + t_k h_k) - G(\bar u) - t_k \dual{\bar \varphi}{h_k}}{t_k^2/2}
		\ge
		G''_{L_0^{\alpha,p}(\Omega)\dualspace}(\bar u, \bar \varphi; h),
	\end{equation*}
	while the reverse inequality has been shown in the first part of the proof.
	Hence, the sequences $\seq{t_k}$ and $\seq{h_k}$ can also serve as recovery sequences
	in the space $L_0^{\alpha,p}(\Omega)\dualspace$.
\end{proof}

\subsection{Second-order conditions for bang-bang control}
\label{subsec:optimal_control}
Finally, we want to use the above findings to study the problem \eqref{eq:P}
with the objective $F$ given in \eqref{eq:obj}
and with the control-to-state map $S$ given by the solution operator
$u \mapsto y$
of the (weak formulation) of the semilinear and elliptic PDE
\begin{equation}
	\label{eq:PDE}
	-\Delta y + a(y) = u \qquad \text{in } \Omega
\end{equation}
equipped with homogeneous Dirichlet conditions.
Here, $\Omega \subset \R^d$ is a bounded Lipschitz domain
with arbitrary dimension $d \ge 2$.
We assume that $a \colon \R \to \R$ is $C^2$
and $a' \ge 0$.
By the usual arguments,
one can check that
\eqref{eq:PDE} has a unique solution
$y \in H_0^1(\Omega) \cap L^\infty(\Omega)$
for every
$u \in L^\infty(\Omega)$,
see \cite[Section~4.2]{Troeltzsch2010:1}
for the case of homogeneous Neumann boundary conditions.
Owing to the celebrated implicit function theorem,
one obtains that $S \colon L^\infty(\Omega) \to H_0^1(\Omega) \cap L^\infty(\Omega)$
is twice continuously Fréchet differentiable
and
$z = S'(u) h$ is the unique solution of the linearized PDE
\begin{equation}
	\label{eq:linearized_state}
	-\Delta z + a'(y) z = h \qquad \text{in } \Omega
\end{equation}
equipped with homogeneous Dirichlet boundary conditions.
Consequently, the functional $F \colon L^\infty(\Omega) \to \R$
from \eqref{eq:obj}
is twice continuously differentiable and one can compute
\begin{align}
	\label{eq:first_derivative_F}
	F'(u) h &= \int_\Omega \varphi h \d\lambda^d, \\
	\label{eq:second_derivative_F}
	F''(u) [h_1, h_2] &=
	\int_\Omega (1 - a''(y) \varphi) [S'(u) h_1] [S'(u) h_2] \d\lambda^d,
\end{align}
where $y = S(u)$ is the state and $\varphi = S'(u) (y - y_d)$ is the adjoint state,
i.e.,
\begin{equation}
	\label{eq:adjoint}
	-\Delta \varphi + a'(y) \varphi = y - y_d \qquad \text{in } \Omega
\end{equation}
equipped with homogeneous Dirichlet conditions.

We are going to apply the theory from \cref{subsec:abstract_ssc}
with the choices
$X = \MM(\Omega)$ and $X = L_0^{\alpha,p}(\Omega)\dualspace$.
In order to verify
\itemref{asm:abstract:3} and \ref{asm:abstract:4},
we have to check that the bilinear form
$F''(\bar u)$ extends to the space $X$.
This is possible if $S'(\bar u)$ can be extended to a continuous mapping
from $X$ to $L^2(\Omega)$.
For the choice $X = \MM(\Omega)$,
it is well known that this only works for dimensions $d \le 3$.
This is precisely the reason for the dimension restriction
in the second-order results
\cite[Example~6.14]{ChristofWachsmuth2017:1}
or
\cite[Section~5]{WachsmuthWachsmuth2021}.

We argue that this dimension restriction can be avoided if we work
in a dual Bessel space $X = L_0^{\alpha,p}(\Omega)\dualspace$.
In order to prove that $S(u)$ can be extended to an operator
from $L_0^{\alpha,p}(\Omega)\dualspace$ to $L^2(\Omega)$,
we use the classical technique from
\cite[Théorème~1]{Stampacchia1960}, see also \cite[Lemma~3.4]{ChristofWachsmuth2021:2}.
\begin{theorem}
	\label{thm:stampacchia_meets_bessel}
	Suppose that $\Omega \subset \R^d$ is a bounded Lipschitz domain, $d \ge 2$.
	Let $\alpha \in [0,1]$ and $p \in (1,\infty)$ be given such that
	\begin{equation}
		\label{eq:assumption_stampacchia}
		\frac12
		<
		\frac1p + \frac{1-\alpha}d
		\le
		1
		.
	\end{equation}
	We define
	\begin{equation*}
		\tau
		\coloneqq
		1 - \frac1p - \frac{2 - \alpha}{d}
		=
		1
		-
		\parens*{
			\frac1p
			+
			\frac{1-\alpha}{d}
		}
		-
		\frac1d
		.
	\end{equation*}
	Further, let $u \in L^\infty(\Omega)$ be arbitrary.
	Then, the operator $S'(u) \colon L^\infty(\Omega) \to L^2(\Omega)$
	given by the solution map of \eqref{eq:linearized_state}
	can be extended uniquely to a continuous operator
	$S'(u) \colon L_0^{\alpha,p}(\Omega)\dualspace \to L^r(\Omega)$,
	where
	we can choose
	$r \in [1, \infty]$
	in case that $\tau < 0$
	and where
	$r \in [1, 1/\tau)$ in case that $\tau \ge 0$.
	The corresponding norm of $S'(u)$ is independent of $u \in L^\infty(\Omega)$.
\end{theorem}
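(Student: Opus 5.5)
We argue by duality in the spirit of Stampacchia. Since the linearized operator $-\Delta + a'(y)$ with homogeneous Dirichlet conditions is self-adjoint, the extension of $S'(u)$ to $L_0^{\alpha,p}(\Omega)\dualspace$ can be reduced to a regularity estimate for the same operator. Concretely, fix $r$ as in the statement with $r > 1$ and let $r'$ be its conjugate exponent. For $g \in L^{r'}(\Omega)$, let $w = S'(u)g$ be the solution of $-\Delta w + a'(y) w = g$ with $w = 0$ on $\partial\Omega$. The goal is the a priori estimate
\begin{equation*}
	\norm{w}_{L_0^{\alpha,p}(\Omega)} \le C \norm{g}_{L^{r'}(\Omega)}
\end{equation*}
with $C$ independent of $a'(y) \ge 0$. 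Once this holds, for $h \in L^\infty(\Omega)$ we have $\int_\Omega (S'(u)h) g \d\lambda^d = \int_\Omega h\, S'(u) g \d\lambda^d = \dual{h}{w}$. Taking the supremum over $g$ gives $\norm{S'(u)h}_{L^r} \le C\norm{h}_{L_0^{\alpha,p}(\Omega)\dualspace}$. Two further points then settle the extension. First, $L^\infty(\Omega)$ is weak-$\star$ dense in $L_0^{\alpha,p}(\Omega)\dualspace$, and norm dense if one works with the transposed operator directly. So the natural extension is $h \mapsto$ the element of $L^r$ defined by $g \mapsto \dual{h}{S'(u)g}$. Second, this formula is automatically the unique continuous extension. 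The case $r = 1$ follows from boundedness of $\Omega$.

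The key step is the estimate for $w$, and I would prove it by interpolating between $\alpha = 0$ and $\alpha = 1$. For $\alpha = 1$ we need $w \in W_0^{1,p}$ whenever $g \in L^{r'}$. For $\alpha = 0$ we need $w \in L^p$. I would obtain both from Stampacchia's truncation method, testing with $T_k(w) = \max(\min(w,k+\cdot)\ldots)$, i.e.\ level-set truncations $(|w|-k)^+\sign(w)$. This method gives $L^\infty$ bounds when $g \in L^{s}$ with $s > d/2$, and Marcinkiewicz/$L^q$ bounds in the other cases. The monotonicity $a' \ge 0$ makes the zeroth-order term harmless, so every constant is independent of $u$.

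Equivalently, and perhaps more cleanly, I would follow the classical duality statement directly. For $h \in W^{-1,q}$-type right-hand sides, Stampacchia's Théorème 1 gives $z \in L^r$ with $1/r = 1/\tilde p' \ldots$; this is the form used in \cite[Lemma~3.4]{ChristofWachsmuth2021:2}. Its Bessel version needs the following chain for $w$. The function $w$ lies in $W_0^{1,s}(\Omega)$ for $1/s = 1/r' - 1/d$, at least when $1/r' > 1/d$. The space $W_0^{1,s}(\Omega) = L_0^{1,s}(\Omega)$ embeds into $L_0^{\alpha,p}(\Omega)$ provided $1/s - 1/d \le 1/p - \alpha/d$, by the Sobolev embedding for Bessel spaces \cite[2.8.4~Theorem]{Ziemer1989}. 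Note that $L_0^{1,s}$ is used here rather than $W^{1,s}$ with its full Lipschitz-domain regularity.

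Bookkeeping the exponents then gives $1/r' = 1/p + (1-\alpha)/d + 1/d - \ldots$, i.e.\ $1/r = \tau$ in the borderline case. This matches the definition of $\tau$. The hypothesis \eqref{eq:assumption_stampacchia} guarantees that the relevant exponents lie in the admissible range, and in particular that $r \ge 2$ can be reached. When $\tau < 0$, the required $s$ exceeds $d$, and $g \in L^{r'}$ with $r'$ close to $1$ still yields $w \in L_0^{\alpha,p}$. This is handled by the Stampacchia $L^\infty$ regime, which controls $\int h\,w$ through $W^{1,s}$ with $s > d$.

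I expect the main obstacle to be the gradient estimate $\norm{\nabla w}_{L^s} \le C\norm{g}_{L^{r'}}$ for $s > 2$ on a merely Lipschitz domain, since $W^{1,s}$ regularity generally fails there for large $s$. To get around it, I would try to avoid gradient regularity beyond $s$ near $2$. Instead I would dualize Stampacchia's truncation estimates and interpolate between $\alpha = 0$, where only $L^\infty$/$L^q$ bounds for $w$ are needed, and the energy estimate in $H_0^1$. Because $\alpha \le 1$, complex interpolation $[L^{q_0}, W_0^{1,2}]_\alpha$ of the solution map, together with the uniformity of all constants in $a'(y) \ge 0$, should land in $L_0^{\alpha,p}$ with the exponents encoded in $\tau$.
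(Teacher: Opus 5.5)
Your duality framework is sound. If you had $\norm{w}_{L_0^{\alpha,p}(\Omega)} \le C\norm{g}_{L^{r'}(\Omega)}$ for the solution $w$ of $-\Delta w + a'(y)w = g$ with $g\in L^\infty(\Omega)$, then symmetry of the operator and density of $L^\infty(\Omega)$ in the reflexive space $L_0^{\alpha,p}(\Omega)\dualspace$ would give the theorem. But that a priori estimate is the entire content of the statement, and you do not prove it.

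\textbf{What the estimate actually requires.} Set $\frac1{s_0} = \frac1p + \frac{1-\alpha}{d}$. Via $W_0^{1,s_0}(\Omega)\embeds L_0^{\alpha,p}(\Omega)$ your chain needs $\norm{\nabla w}_{L^{s_0}} \le C\norm{g}_{L^{r'}}$, and \eqref{eq:assumption_stampacchia} says exactly that $s_0\in[1,2)$. So gradient integrability above $2$ is never needed: when $L^{r'}\embeds H^{-1}(\Omega)$ you simply use $H_0^1\embeds W_0^{1,s_0}$. The obstacle you single out therefore does not occur.

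What is needed is gradient integrability \emph{below} the energy level for data not in $H^{-1}(\Omega)$. For $r$ near $1/\tau$ one has $\frac1{r'}$ near $\frac1{s_0}+\frac1d>\frac12+\frac1d$. For $\tau<0$, reaching $r=\infty$ forces $g\in L^1(\Omega)$, and there $s_0<\frac{d}{d-1}$, not $s>d$. The condition $s>d$ is Stampacchia's $L^\infty$ criterion on the side of $h$ (the conjugate exponent $s_0'>d$), not a property of $\nabla w$.

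The required bound is the Boccardo--Gallou\"et estimate. It does hold on arbitrary bounded domains with constants independent of $a'(y)\ge 0$. However, it needs its own truncation argument: bound $\int_{\set{k<\abs{w}<k+1}}\abs{\nabla w}^2$ by $\int_{\set{\abs{w}>k}}\abs{g}$ and sum over level sets. The Stampacchia bounds you cite control only $w$, not $\nabla w$.

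\textbf{Why the interpolation fallback does not help.} First, the identification of $[L^{q_0}(\Omega),W_0^{1,2}(\Omega)]_\alpha$ with $L_0^{\alpha,p}(\Omega)$ on a Lipschitz domain is unproved. Second, $q_0\ge 1$ gives at best $\frac1p=\frac{1-\alpha}{q_0}+\frac\alpha2\le 1-\frac\alpha2$. But \eqref{eq:assumption_stampacchia} admits $\frac1p$ up to $1-\frac{1-\alpha}{d}$, which is strictly larger as soon as $\alpha>\frac{2}{d+2}$. Third, with a fixed source $L^{r'}$ the $H_0^1$ endpoint needs $L^{r'}\embeds H^{-1}(\Omega)$, which fails for $r$ near $1/\tau$. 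The $H_0^1$ endpoint is too strong; you need a $W_0^{1,s_0}$ endpoint with $s_0<2$ anyway.

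\textbf{How the paper avoids this.} The paper works directly with $z=S'(u)h$ for $h\in L^\infty(\Omega)$. Testing with the truncation $z_k$ gives $c\norm{z_k}_{H_0^1}^2\le\int_\Omega h z_k\le C\norm{h}_{L_0^{\alpha,p}(\Omega)\dualspace}\norm{z_k}_{W_0^{1,s_0}}$. Because $s_0<2$, H\"older on $\set{\abs{z}>k}$ yields $\norm{z_k}_{W_0^{1,s_0}}\le\lambda^d(\set{\abs{z}>k})^{1/s_0-1/2}\norm{z_k}_{H_0^1}$. Together with $H_0^1\embeds L^q$, this is precisely the input for Stampacchia's level-set lemma. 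Only energy estimates of truncations are used, so neither sub-energy gradient bounds nor interpolation theory is required.
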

\begin{proof}
	It is sufficient to show that
	\begin{equation*}
		\norm{S'(u) h}_{L^r(\Omega)}
		\le
		C \norm{h}_{L_0^{\alpha,p}(\Omega)\dualspace}
		\qquad
		\forall h \in L^\infty(\Omega).
	\end{equation*}
	Then, $S'(u)$ can be extended by continuity,
	since
	$L^\infty(\Omega)$ is dense in $L_0^{\alpha,p}(\Omega)\dualspace$.

	Let $h \in L^\infty(\Omega)$ be given.
	The function $z \coloneqq S'(u)h \in H_0^1(\Omega)$ satisfies the weak formulation
	\begin{equation*}
		\int_\Omega \nabla z \nabla \psi + a'(y) z \psi \d\lambda^d
		=
		\int_\Omega h \psi \d\lambda^d
		\qquad\forall \psi \in H_0^1(\Omega),
	\end{equation*}
	where $y = S(u)$.

	For $k > 0$, we define the shrinkage
	$z_k \coloneqq  z - \min\set{k, \max\set{ - k, z} } \in H_0^1(\Omega)$
	and the set
	\begin{equation*}
		L(k)
		\coloneqq
		\set{ z_k \ne 0 }
		=
		\set{ \abs{z} > k }
		.
	\end{equation*}
	By testing the weak formulation with $\psi = z_k$,
	we get
	\begin{equation*}
		\int_\Omega \nabla z \cdot \nabla z_k + a'(y) z z_k \d\lambda^d
		=
		\int_\Omega h z_k \d\lambda^d
		.
	\end{equation*}
	Due to $a'(y) \ge 0$, $z z_k \ge 0$ and $\nabla z \cdot \nabla z_k = \abs{\nabla z_k}^2$,
	we get
	\begin{equation*}
		\int_\Omega \abs{\nabla z_k}^2 \d\lambda^d
		\le
		\int_\Omega h z_k \d\lambda^d
		.
	\end{equation*}
	On the left-hand side, we can use the Poincaré inequality and the Sobolev embedding theorem
	to get
	\begin{equation}
		\label{eq:stampacchia_1}
		c \norm{z_k}_{L^q(\Omega)} \norm{z_k}_{H_0^1(\Omega)}
		\le
		c \norm{z_k}_{H_0^1(\Omega)}^2
		\le
		\int_\Omega h z_k \d\lambda^d
		,
	\end{equation}
	for $q \in (2,\infty)$ given by
	\begin{equation*}
		\frac1q
		=
		\frac12
		-
		\frac1d
	\end{equation*}
	for dimensions $d \ge 3$
	and for arbitrary $q \in (2,\infty)$
	for $d = 2$.
	On the right-hand side,
	\begin{equation}
		\label{eq:stampacchia_2}
		\int_\Omega h z_k \d\lambda^d
		\le
		\norm{h}_{L_0^{\alpha,p}(\Omega)\dualspace}
		\norm{z_k}_{L_0^{\alpha,p}(\Omega)}
		.
	\end{equation}
	By the assumption on $\alpha$ and $p$,
	the number $s \in \R$ defined via
	\begin{equation*}
		\frac{1}{p}
		=
		\frac{1}{s} - \frac{1-\alpha}{d}
	\end{equation*}
	satisfies $s \in [1,2)$.
	The Sobolev embedding theorem implies
	$W_0^{1,s}(\Omega) = L_0^{1,s}(\Omega) \embeds L_0^{\alpha,p}(\Omega)$,
	see \cite[Corollary~3.1.5]{AdamsHedberg1996}.
	Consequently,
	Hölder's inequality implies
	\begin{equation*}
		c
		\norm{z_k}_{L_0^{\alpha,p}(\Omega)}
		\le
		\norm{z_k}_{W_0^{1,s}(\Omega)}
		\le
		\lambda^d( L(k) )^{1/s - 1/2}
		\norm{z_k}_{H_0^1(\Omega)}
		.
	\end{equation*}
	By combining this with \eqref{eq:stampacchia_1} and \eqref{eq:stampacchia_2},
	we arrive at
	\begin{equation*}
		\norm{z_k}_{L^q(\Omega)}
		\le
		c
		\norm{h}_{L_0^{\alpha,p}(\Omega)\dualspace}
		\lambda^d( L(k) )^{1/s - 1/2}
		.
	\end{equation*}
	Now, we can continue as in
	\cite[Lemma~3.4]{ChristofWachsmuth2021:2}.
	Indeed, \cite[(3.10)]{ChristofWachsmuth2021:2}
	yields
	\begin{equation*}
		(m - k)
		\lambda^d(L(m))^{1/q}
		\le
		\parens*{
			\int_{L(m)} (\abs{z} - k)^q \d\lambda^d
		}^{1/q}
		\le
		\norm{z_k}_{L^q(\Omega)}
		\qquad
		\forall
		m > k \ge 0
		.
	\end{equation*}
	Together with the previous inequality,
	we get
	\begin{equation*}
		\lambda^d(L(m))
		\le
		c
		(m - k)^{-q}
		\norm{h}_{L_0^{\alpha,p}(\Omega)\dualspace}^q
		\lambda^d( L(k) )^{\sigma}
		\qquad
		\forall
		m > k \ge 0
	\end{equation*}
	with
	\begin{equation*}
		\sigma
		\coloneqq
		q \parens*{ \frac 1s - \frac 12 }
		=
		q \parens*{ \frac 1p + \frac{1-\alpha}{d} - \frac 12 }
		.
	\end{equation*}
	Note that \eqref{eq:assumption_stampacchia} implies $\sigma > 0$.

	In case $\tau > 0$,
	we have
	\begin{equation*}
		\frac{\sigma}{q}
		=
		-\tau + \frac12 - \frac1d
		\le
		-\tau + \frac1q
		<
		\frac1q,
	\end{equation*}
	i.e.,
	$\sigma < 1$.
	Thus, we can invoke
	\cite[Lemme Préliminaire, case 1]{Stampacchia1960}
	and this implies
	\begin{equation*}
		\lambda^d(\set{\abs{z} \ge k})
		=
		\lambda^d(L(k))
		\le
		C
		k^{-\hat r} \norm{h}_{L_0^{\alpha,p}(\Omega)}^{\hat r}
		\qquad
		\forall k > 0
		,
	\end{equation*}
	where
	\begin{equation*}
		\hat r
		\coloneqq
		\frac{q}{1 - \sigma}
		=
		\frac{1}{
			\frac12
			+
			\frac1q
			-
			\frac1p
			-
			\frac{1-\alpha}{d}
		}
		.
	\end{equation*}
	In case $d > 3$, we have $\hat r = 1/\tau$,
	whereas in case $d = 2$,
	$\hat r$ is smaller than $1/\tau$
	but arbitrarily close to $1/\tau$ (by choosing $q < \infty$ big enough).
	Note that 
	this implies that $z$ belongs to the weak Lebesgue space with exponent $\hat r$
	and its quasinorm can be bounded by a constant times
	$\norm{h}_{L_0^{\alpha,p}(\Omega)}$.
	Since $\lambda^d(\Omega) < \infty$,
	a standard argument shows that
	$\norm{z}_{L^r(\Omega)} \le C \norm{h}_{L_0^{\alpha,p}(\Omega)}$
	for any $r < \hat r$.
	This shows the claim in the case $\tau > 0$.

	The case $\tau = 0$ can be handled by making $p$ a tiny bit bigger
	which brings us back to case $\tau > 0$.
	This provides the estimate
	$\norm{z}_{L^r(\Omega)} \le C \norm{h}_{L_0^{\alpha,p}(\Omega)}$
	for all $r \in (1,\infty)$.

	Finally, in the other case $\tau < 0$,
	we can choose $q$ big enough (in case $d = 2$)
	and this guarantees $\sigma > 1$.
	Consequently,
	\cite[Lemme Préliminaire, case 2]{Stampacchia1960}
	implies
	$\norm{z}_{L^\infty(\Omega)} \le C \norm{h}_{L_0^{\alpha,p}(\Omega)}$.
\end{proof}
We briefly mention that the result of \cref{thm:stampacchia_meets_bessel} is (almost) optimal.
Indeed, since the Laplacian is a second-order differential operator,
the regularity of the solution of \eqref{eq:linearized_state} with a right-hand side in
$L_0^{\alpha,p}(\Omega)\dualspace$
(which is sometimes denoted by $L^{-\alpha, p'}(\Omega)$ with the conjugate exponent $p'$ of $p$)
cannot be better than
$L^{2 - \alpha, p'}(\Omega)$.
By the Sobolev embedding theorem, this space embeds into
$L^q(\Omega)$
for
$1/q = 1/p' - (2-\alpha)/d = 1 - 1/p - (2-\alpha)/d = \tau$
(in case that the right-hand side is positive)
and does not embed into better spaces.
This is almost the regularity guaranteed by the previous result.
In \cite{Stampacchia1960},
the Marcinkiewicz interpolation theorem was used to prove optimal estimates.
In the setting of \cref{thm:stampacchia_meets_bessel},
the application of this interpolation theorem in case $\tau > 0$ should imply
the boundedness of $S'(u)$ from $L_0^{\alpha,p}(\Omega)\dualspace$
to $L^{1/\tau}(\Omega)$.

From \cref{thm:stampacchia_meets_bessel} together with the mean-value inequality,
we directly get the Lipschitz estimate
\begin{equation}
	\label{eq:lipschitz}
	\begin{aligned}
		\norm{ S(u_1) - S(u_2) }_{L^r(\Omega)}
		&\le
		\sup_{t \in [0,1]} \norm{S'(u_1 + t (u_2 - u_1))}
		\norm{u_1 - u_2}_{L_0^{\alpha,p}(\Omega)\dualspace}
		\\&
		\le
		C \norm{u_1 - u_2}_{L_0^{\alpha,p}(\Omega)\dualspace}
	\end{aligned}
\end{equation}
for all $u_1, u_2 \in L^\infty(\Omega)$.

For the required sequential weak-$\star$ continuity of $F''(\bar u)$,
see \itemref{asm:abstract:4},
we need some compactness of $S'(\bar u)$.
This follows easily from a compact embedding.

\begin{lemma}
	\label{lem:compact}
	We use the setting of \cref{thm:stampacchia_meets_bessel}
	with the additional requirement that $r < \infty$ if $\alpha = 1$.
	Then,
	the solution map
	$S'(u) \colon L_0^{\alpha,p}(\Omega)\dualspace \to L^r(\Omega)$
	is compact
	for all $u \in L^\infty(\Omega)$.
\end{lemma}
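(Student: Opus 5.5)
The plan is to factor $S'(u)$ through a compact embedding, using a slightly better parameter pair in \cref{thm:stampacchia_meets_bessel}. The key observation is that the admissible range of $r$ in that theorem is open in the relevant direction. Either $\tau<0$ and $r=\infty$ is allowed, or $r<1/\tau$ is required. In both cases we can perturb $(\alpha,p)$ to a pair $(\beta,\tilde p)$ that still satisfies \eqref{eq:assumption_stampacchia} and still yields the same target $L^r(\Omega)$, with $r<\infty$ when needed. This perturbed pair should make $L_0^{\alpha,p}(\Omega)\dualspace$ embed compactly into $L_0^{\beta,\tilde p}(\Omega)\dualspace$. Then $S'(u)$ factors as the compact embedding followed by the bounded extension $S'(u)\colon L_0^{\beta,\tilde p}(\Omega)\dualspace\to L^r(\Omega)$ from \cref{thm:stampacchia_meets_bessel}. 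Both extensions agree on the dense subspace $L^\infty(\Omega)$, so the composition is $S'(u)$, which is therefore compact.

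Concretely, I would choose $\beta>\alpha$ and keep $p$ fixed, taking $\beta\in(\alpha,1]$ close to $\alpha$. Increasing $\alpha$ decreases $\frac1p+\frac{1-\alpha}{d}$, so the upper bound in \eqref{eq:assumption_stampacchia} is preserved. The strict lower bound $>\frac12$ survives a small perturbation. The exponent $\tau$ increases only by $(\beta-\alpha)/d$, so for $\beta$ close to $\alpha$ we still have $r<1/\tau_\beta$ in the case $\tau\ge0$. In the case $\tau<0$ we still have $\tau_\beta<0$, or we have $r<1/\tau_\beta$ whenever $r<\infty$.

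This fails when $\alpha=1$, since then $\beta>1$ is not allowed. That is exactly why the lemma requires $r<\infty$ in that case. When $\alpha=1$, I would instead keep $\alpha=1$ and replace $p$ by a slightly larger $\tilde p$. This should give $L_0^{1,p}(\Omega)\dualspace=W_0^{1,p}(\Omega)\dualspace\embeds W_0^{1,\tilde p}(\Omega)\dualspace$, but that embedding is not compact. So for $\alpha=1$ I would rather use a pair $(\beta,\tilde p)$ with $\beta<1$ close to $1$ and $\tilde p$ adjusted to keep $\frac1{\tilde p}+\frac{1-\beta}{d}$ nearly unchanged. The required compactness is then $L_0^{\beta,\tilde p}(\Omega)\compactlyembeds L_0^{1,p}(\Omega)$ at the level of the preduals. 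The finiteness of $r$ gives room because $\tau$ may increase slightly.

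The main step, and the obstacle, is the compactness of the embedding between the preduals, $L_0^{\beta,\tilde p}(\Omega)\embeds L_0^{\alpha,p}(\Omega)$ with $\beta>\alpha$ (or, more generally, a strictly better Sobolev index $\beta-d/\tilde p>\alpha-d/p$). By Schauder's theorem this is equivalent to compactness of the dual embedding. For bounded $\Omega$, I would prove it via Rellich-type arguments in Bessel spaces. Write $\varphi=g_\beta\cnv f$. Localize with a cutoff $\chi\in C_c^\infty$ equal to $1$ on $\bar\Omega$, which is harmless since all functions are supported in $\bar\Omega$. Then note that $g_{\alpha}^{-1}$ applied to $\varphi$ equals $g_{\beta-\alpha}\cnv f$. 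This reduces the claim to compactness of $f\mapsto \chi\,(g_{\beta-\alpha}\cnv f)$-type operators on $L^p$. That compactness follows from the Fréchet--Kolmogorov criterion, using $g_{\beta-\alpha}\in L^1$ together with the support localization; the commutator with $\chi$ is handled by the known mapping properties of Bessel potentials. Alternatively, I would cite the standard compact embedding of Bessel potential spaces on bounded domains, e.g.\ from \cite{AdamsHedberg1996}. Duality then gives the compact embedding $L_0^{\alpha,p}(\Omega)\dualspace\compactlyembeds L_0^{\beta,\tilde p}(\Omega)\dualspace$, and the factorization above concludes the proof.
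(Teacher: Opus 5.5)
Your argument for $\alpha<1$ is the paper's argument. You raise $\alpha$ to $\beta=\alpha+\varepsilon$ with $p$ and $r$ fixed, observe that \eqref{eq:assumption_stampacchia} and the admissible range of $r$ survive, and factor $S'(u)$ through the dual of the compact embedding $L_0^{\beta,p}(\Omega)\embeds L_0^{\alpha,p}(\Omega)$ (which the paper simply cites).

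The case $\alpha=1$, however, breaks down. You rely on the embedding $L_0^{\beta,\tilde p}(\Omega)\embeds L_0^{1,p}(\Omega)=W_0^{1,p}(\Omega)$ with $\beta<1$. This embedding is false: it is not merely non-compact but not even continuous, for any $\tilde p$, because integrability cannot be traded for differentiability. For a cut-off $\chi\in C_c^\infty(\Omega)$, the functions $\chi(x)\sin(n x_1)$ have $L^{\beta,\tilde p}$-norm of order at most $n^\beta$ but $W^{1,p}$-norm of order $n$. Accordingly, your parenthetical criterion of a ``strictly better Sobolev index'' gives a compact embedding only if, in addition, $\beta>\alpha$. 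In fact, no factorization of your type can work at $\alpha=1$. \cref{thm:stampacchia_meets_bessel} only controls $S'(u)$ on $L_0^{\beta,\tilde p}(\Omega)\dualspace$ with $\beta\le 1$. For $\beta<1$ the required embedding does not exist, and for $\beta=1$ the embedding $W_0^{1,\tilde p}(\Omega)\embeds W_0^{1,p}(\Omega)$ is never compact, as you observed yourself.

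The paper therefore uses a different mechanism for $\alpha=1$. It identifies $W_0^{1,p}(\Omega)$ with a closed subspace of $L^p(\Omega)^{d+1}$ via $v\mapsto(v,\nabla v)$. It then interpolates compactness for the adjoint $S'(u)\adjoint$ via \cite[Theorem~4.2.9]{BennettSharpley1988}, between two facts: boundedness $L^{s_1'}(\Omega)\to L^q(\Omega)^{d+1}$ for some $q<p$ (from \cref{thm:stampacchia_meets_bessel}), and compactness $L^{s_2'}(\Omega)\to L^2(\Omega)^{d+1}$ (Lax--Milgram plus Rellich). The hypothesis $r<\infty$ enters because this interpolation only reaches finite exponents.

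If you want to stay closer to your own idea, put the compactness on the target side instead of the domain side. Since $p<2$ for $\alpha=1$, you have $W_0^{1,p}(\Omega)\dualspace\embeds H_0^1(\Omega)\dualspace$, so Lax--Milgram and Rellich make $S'(u)\colon W_0^{1,p}(\Omega)\dualspace\to L^2(\Omega)$ compact. Since $r<\infty$, \cref{thm:stampacchia_meets_bessel} gives boundedness into $L^{\tilde r}(\Omega)$ for some $\tilde r>r$. Both extensions agree on the dense subspace $L^\infty(\Omega)$. For $r>2$, the inequality $\norm{w}_{L^r(\Omega)}\le\norm{w}_{L^2(\Omega)}^\theta\norm{w}_{L^{\tilde r}(\Omega)}^{1-\theta}$ then transfers the compactness to $L^r(\Omega)$; for $r\le 2$ there is nothing left to do.
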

\begin{proof}
	First, we discuss the case $\alpha < 1$.
	One can check that if $\alpha$, $p$ and $r$ satisfy the assumptions of \cref{thm:stampacchia_meets_bessel},
	then they are also satisfied by $\alpha + \varepsilon$, $p$ and $r$
	if $\varepsilon > 0$ is chosen small enough.
	Consequently,
	$S'(u)$ is bounded from
	$L_0^{\alpha+\varepsilon,p}(\Omega)\dualspace$
	to
	$L^r(\Omega)$.
	From \cite[Proposition~3.1]{BellidoCuetoGarciaSaez2025}
	we get the compactness of the embedding from
	$L_0^{\alpha+\varepsilon,p}(\Omega)$
	to
	$L_0^{\alpha,p}(\Omega)$
	for $0 < \alpha < \alpha + \varepsilon < 1$.
	By dualizing this embedding, we get that
	the embedding from
	$L_0^{\alpha,p}(\Omega)\dualspace$
	to
	$L_0^{\alpha+\varepsilon,p}(\Omega)\dualspace$
	is compact.
	The combination of these two results yields the desired compactness of
	$S'(u) \colon L_0^{\alpha,p}(\Omega)\dualspace \to L^r(\Omega)$.

	In the case $\alpha = 1$,
	we have that $L_0^{1,p}(\Omega) = W_0^{1,p}(\Omega)$
	with equivalent norms for all $p \in (1,\infty)$.
	Due to \eqref{eq:assumption_stampacchia},
	we have $p \in (1,2)$.
	From \cref{thm:stampacchia_meets_bessel},
	we get the boundedness of $S'(u)$ from
	$W_0^{1,q}(\Omega)\dualspace$ to $L^{s_1}(\Omega)$
	for $q$ slightly smaller than $p$
	and $s_1 \in (1,\infty)$ such that $1/s_1$
	is slighly bigger than $1 - 1/q - 1/d$ if this quantity is non-negative
	and $s_1 = \infty$ in case that it is negative.
	By Lax-Milgram,
	we get boundedness from
	$W_0^{1,2}(\Omega)\dualspace$ to $W_0^{1,2}(\Omega)$
	and the latter space embeds compactly into
	$L^{s_2}(\Omega)$
	for all $s_2$ such that
	$1/s_2$ is slightly bigger than
	$1 - 1/2 - 1/d$.
	In order to interpolate,
	it is most convenient
	to consider the adjoints $S'(u)\adjoint$
	and
	to identify $W_0^{1,p}(\Omega)$
	with a closed subset of $L^p(\Omega)^{d+1}$ via the map $u \mapsto (u, \nabla u)$.
	Under this identification,
	$S'(u)\adjoint$
	is bounded
	from $L^{s_1'}(\Omega)$ to $L^q(\Omega)^{d+1}$
	and
	compact from
	$L^{s_2'}(\Omega)$ to $L^2(\Omega)^{d+1}$.
	Since $p$ lies strictly between $q$ and $2$,
	we can apply the interpolation result
	\cite[Theorem~4.2.9]{BennettSharpley1988}
	to obtain the compactness of
	$S'(u)\adjoint$
	from
	$L^s(\Omega)$ to $L^p(\Omega)^{d+1}$,
	where $s \in (1,\infty)$ is chosen appropriately.
	Note that this is possible
	(by chosing $q$, $s_1$ and $s_2$ large enough)
	for all $s$
	such that $1/s$ is only slightly bigger than
	$1 - 1/p - 1/d$
	in case that this quantity is non-negative.
	If it is negative, we can obtain arbitrary large $s < \infty$.
	In fact, it is possible to obtain $s = r$.
	This compactness of $S'(u)\adjoint$
	is equivalent to the compactness of
	$S'(u)$
	from
	$W_0^{1,p}(\Omega)\dualspace = L_0^{1,p}(\Omega)\dualspace$
	to
	$L^r(\Omega)$.
\end{proof}

We collect the assumptions which are needed in the sequel.
\begin{assumption}
	\label{asm:everything}
	We consider the problem \eqref{eq:P}
	with the control-to-state map $S$ given by the solution operator of \eqref{eq:PDE}
	and the tracking term objective $F$ from \eqref{eq:obj}.
	\begin{enumerate}
		\item
			\label{asm:everything:1}
			The set $\Omega \subset \R^d$, $d \ge 2$,
			is a nonempty, open, and bounded Lipschitz domain.
		\item
			\label{asm:everything:2}
			The function $a \colon \R \to \R$ is increasing
			and twice continuously differentiable.
			We have $y_d \in L^\infty(\Omega)$.
		\item
			\label{asm:everything:3}
			The feasible point $\bar u \in \Uad$ of \eqref{eq:P}
			is stationary,
			i.e., $\bar\varphi \coloneqq F'(\bar u) \in H_0^1(\Omega)$
			satisfies $\bar u = \sgn(\bar\varphi)$.
			We further assume that $\bar\varphi$ can be extended to a function $\bar\varphi \in C^1(\R^d)$
			such that \cref{asm:standing} is satisfied,
			i.e., $\nabla\bar\varphi \ne 0$ on $\set{\bar\varphi = 0} \cap \bar\Omega$.
		\item
			\label{asm:everything:4}
			We assume that the parameters $\alpha$, $p$
			are chosen
			such that the assumptions of \cref{thm:main_inequality} are satisfied
			and such that \cref{lem:compact}
			yields compactness of $S'(u) \colon L_0^{\alpha,p}(\Omega)\dualspace \to L^2(\Omega)$.
	\end{enumerate}
\end{assumption}
The required extension of $\bar\varphi$
follows from Whitney's extension theorem
if we have $\bar\varphi \in C^1(\bar\Omega)$,
see \cite[Theorem~2.7]{WachsmuthWachsmuth2025:1}
for an introduction.

We mention that we can always choose
\begin{equation*}
	\alpha = \frac12 + \varepsilon
	\quad\text{and}\quad
	p = 2
\end{equation*}
for $\varepsilon > 0$ small enough
or
\begin{equation*}
	\alpha = 1 \quad\text{and}\quad p = \frac{2 d}{d + 1} < 2
	.
\end{equation*}

\begin{lemma}
	\label{lem:verify_abstract}
	If \cref{asm:everything} holds,
	\cref{asm:abstract}
	is satisfied by
	the choice $X = L_0^{\alpha,p}(\Omega)\dualspace$.
	Further, the non-degeneracy condition \eqref{eq:NDC}
	holds.
\end{lemma}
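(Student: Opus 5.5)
We check the four items of \cref{asm:abstract} with $Y = L_0^{\alpha,p}(\Omega)$, $X = Y\dualspace$, $G$ the indicator of $\Uad$, $\bar x = \bar u$, and then obtain \eqref{eq:NDC} from \cref{lem:verification_NDC} together with \cref{thm:main_inequality}.

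Item \ref{asm:abstract:1}: $L_0^{\alpha,p}(\Omega)$ is a closed subspace of $L^{\alpha,p}(\R^d)$, which is isometric to $L^p(\R^d)$ with $p \in (1,\infty)$. Hence $Y$ is separable (and reflexive), and $X$ is its dual. Item \ref{asm:abstract:2} is immediate, since $\dom G = \Uad \subset L^\infty(\Omega)$ and $F$ is defined there. For item \ref{asm:abstract:3} we need $F'(\bar u) = \bar\varphi \in Y$. This was argued before \cref{thm:equal_subderivatives} for $\alpha \le 1$: $\bar\varphi \in C^1$, $\bar\varphi|_\Omega \in C_0(\Omega)$ and $\Omega$ is Lipschitz, so $\bar\varphi|_\Omega \in W_0^{1,p}(\Omega) = L_0^{1,p}(\Omega) \embeds L_0^{\alpha,p}(\Omega)$. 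Note that \cref{lem:compact} forces $\alpha \le 1$ through the setting of \cref{thm:stampacchia_meets_bessel}.

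We define $F''(\bar u)$ on $X$ by the formula \eqref{eq:second_derivative_F}, replacing $S'(\bar u)$ by its extension $X \to L^2(\Omega)$ from \cref{thm:stampacchia_meets_bessel}. The weight $1 - a''(\bar y)\bar\varphi$ is in $L^\infty$ because $\bar y, \bar\varphi \in L^\infty$, so the form is bounded. Item \ref{asm:abstract:4} follows from \cref{lem:compact}: if $h_k \weaklystar h$ in $X$, compactness of $S'(\bar u)\colon X \to L^2(\Omega)$ gives $S'(\bar u)h_k \to S'(\bar u)h$ strongly in $L^2$. (One should check that weak-$\star$ convergent sequences are mapped to strongly convergent ones. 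This holds because $Y$ is reflexive, so weak-$\star$ equals weak, and compact operators map weakly convergent sequences to strongly convergent ones.)

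The main obstacle is the Taylor expansion \eqref{eq:taylor_F}. Let $u_k \coloneqq \bar u + t_k h_k \in \Uad$ with $h_k \weaklystar h$. Then $\norm{h_k}_X$ is bounded, so $\norm{u_k - \bar u}_X = O(t_k)$, but $u_k$ need not converge to $\bar u$ in $L^\infty$. I would therefore not use Fréchet differentiability in $L^\infty$. Instead I would expand directly. With $y_k = S(u_k)$ and $\bar y = S(\bar u)$, we have
\begin{equation*}
	F(u_k) - F(\bar u) = \int_\Omega (\bar y - y_d)(y_k - \bar y) \d\lambda^d + \frac12 \norm{y_k - \bar y}_{L^2(\Omega)}^2 .
\end{equation*}
The Lipschitz estimate \eqref{eq:lipschitz} gives $\norm{y_k - \bar y}_{L^r} \le C t_k$. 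Let $z_k \coloneqq S'(\bar u)(t_k h_k)$, and write $\delta_k \coloneqq y_k - \bar y - z_k$. It solves
\begin{equation*}
	-\Delta \delta_k + a'(\bar y)\delta_k = -\bigl(a(y_k) - a(\bar y) - a'(\bar y)(y_k - \bar y)\bigr),
\end{equation*}
whose right-hand side equals $-\tfrac12 a''(\bar y)(y_k-\bar y)^2 + o(\abs{y_k-\bar y}^2)$. This uses uniform $L^\infty$ bounds on the states, valid since $\Uad$ is bounded in $L^\infty$. Testing the adjoint equation \eqref{eq:adjoint} against this yields
\begin{equation*}
	\int_\Omega (\bar y - y_d)(y_k - \bar y) \d\lambda^d = t_k \dual{\bar\varphi}{h_k} - \frac12 \int_\Omega a''(\bar y)\bar\varphi\,(y_k - \bar y)^2 \d\lambda^d + o(t_k^2).
\end{equation*}
The $o(t_k^2)$ term needs $y_k \to \bar y$ in measure together with $L^\infty$ bounds and dominated convergence. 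Convergence in measure follows from the $L^r$ Lipschitz estimate.

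It remains to replace $y_k - \bar y$ by $z_k$ in the quadratic terms up to $o(t_k^2)$, i.e.\ to show $\norm{\delta_k}_{L^2} = o(t_k)$. Its right-hand side is bounded in $L^1$ by $C\norm{y_k - \bar y}_{L^2}^2 = O(t_k^2)$. For $d \ge 4$, however, $L^1$ data do not give $L^2$ solutions, so I would instead bound the right-hand side in $L^{s}$ with $s$ close to the exponent allowed by interpolation between $L^r$ and $L^\infty$. This gives $\norm{\delta_k}_{L^2} \le C\norm{y_k-\bar y}_{L^{2s}}^2$, which is $O(t_k^{1+\theta})$ with some $\theta>0$, using the uniform $L^\infty$ bound and the $L^r$ rate from \cref{thm:stampacchia_meets_bessel} with $r \ge 2$. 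Combining, and using the weak-$\star$ continuity from item \ref{asm:abstract:4} to handle $h_k$ versus $h$ where needed, gives \eqref{eq:taylor_F} with \eqref{eq:second_derivative_F}.

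Finally, for \eqref{eq:NDC} we invoke \cref{lem:verification_NDC}. For $x = u \in \Uad$, \eqref{eq:FONC} gives
\begin{equation*}
	G(u) - G(\bar u) + \dual{\bar\varphi}{u - \bar u} = \int_\Omega \abs{\bar\varphi(u - \bar u)} \d\lambda^d \ge \tfrac12 \norm{u - \bar u}_{L^\infty}\int_\Omega \abs{\bar\varphi(u - \bar u)} \d\lambda^d ,
\end{equation*}
since $\norm{u - \bar u}_{L^\infty} \le 2$. By \cref{thm:main_inequality}, the right-hand side is at least $c\norm{u-\bar u}_{L^{\alpha,p}(\R^d)\dualspace}^2 \ge c\norm{u - \bar u}_X^2$, because restricting functionals to the subspace $L_0^{\alpha,p}(\Omega)$ does not increase the norm. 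For $u \notin \Uad$ the left-hand side is $+\infty$. So \eqref{eq:growth_for_NDC} holds for all $x \in X$, and the lemma gives \eqref{eq:NDC} and \eqref{eq:NDC2}.
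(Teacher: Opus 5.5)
Your proposal is correct, but it reaches the Taylor expansion \eqref{eq:taylor_F} by a different route from the paper. Items \ref{asm:abstract:1}, \ref{asm:abstract:2}, \ref{asm:abstract:4} and \eqref{eq:NDC} are handled as in the paper: reflexivity plus \cref{lem:compact}, and \cref{thm:main_inequality} fed into \cref{lem:verification_NDC}. You also supply details the paper leaves implicit, namely $\bar\varphi \in L_0^{\alpha,p}(\Omega)$, the factor $\norm{u-\bar u}_{L^\infty(\Omega)} \le 2$, and $\norm{\cdot}_X \le \norm{\cdot}_{L^{\alpha,p}(\R^d)\dualspace}$.

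\emph{The paper's route.} It does use that $F$ is $C^2$ on $L^\infty(\Omega)$, but only through the Lagrange form $F(\bar u + t_k h_k) = F(\bar u) + t_k \dual{\bar\varphi}{h_k} + \frac12 t_k^2 F''(\bar u + \xi_k t_k h_k) h_k^2$ along the segment. This needs no $L^\infty$-convergence of $u_k$. The error $[F''(\bar u + \xi_k t_k h_k) - F''(\bar u)]h_k^2$ is then controlled by a Casas-type bound in terms of $\norm{S(u)-\bar y}_{L^\infty(\Omega)} + \norm{\varphi_u - \bar\varphi}_{L^\infty(\Omega)}$. So the paper must prove uniform convergence of the states, via an interpolation estimate $\norm{S'(u)h}_{L^\infty(\Omega)} \le C \norm{h}_X^\lambda \norm{h}_{L^\infty(\Omega)}^{1-\lambda}$, and then of the adjoints.

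\emph{Your route.} You expand by hand: the adjoint equation absorbs the linear term. The remainder $\delta_k$ is bounded in $L^2(\Omega)$ using that the linearized solution operator maps $L^s(\Omega)$ to $L^2(\Omega)$ for some $s<2$. Hölder interpolation between the rate \eqref{eq:lipschitz} and the uniform $L^\infty$ bounds then gives $\norm{\delta_k}_{L^2(\Omega)} = O(t_k^{1+\theta})$ with $\theta>0$, since $r \ge 2 > s$. Your argument needs only convergence of the states in measure and is self-contained. The paper's is shorter because it delegates the second-order calculus to the $C^2$ property of $F$ and to known estimates.

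\emph{One step to tighten.} You need $\int_\Omega \bar\varphi\,(a''(\eta_k) - a''(\bar y))(y_k - \bar y)^2 \d\lambda^d = o(t_k^2)$, where $\eta_k$ lies between $\bar y$ and $y_k$. After dividing by $t_k^2$, convergence in measure and dominated convergence alone do not give this. You also need equi-integrability of $\abs{y_k - \bar y}^2/t_k^2$. This is available in either of two ways:
\begin{itemize}
\item The admissible range of $r$ in \cref{thm:stampacchia_meets_bessel} is open at the top, so \eqref{eq:lipschitz} holds for some $r>2$, and Hölder's inequality finishes.
\item $(y_k - \bar y)/t_k = S'(\bar u)h_k + \delta_k/t_k$ converges strongly in $L^2(\Omega)$, by \cref{lem:compact} and $\norm{\delta_k}_{L^2(\Omega)} = o(t_k)$.
\end{itemize}
This step, not \eqref{eq:taylor_F} itself (which is stated with $h_k$), is the only place where comparing $h_k$ with $h$ helps.
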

\begin{proof}
	\hyperref[asm:abstract:1]{\namecrefs{asm:abstract}~\ref*{asm:abstract}\ref*{asm:abstract:1}}
	and 
	\ref{asm:abstract:2} are clear.
	Since $X$ is a reflexive space,
	\itemref{asm:abstract:4} follows from \cref{lem:compact}
	and \eqref{eq:second_derivative_F}.
	The non-degeneracy condition follows from the combination of
	\cref{thm:main_inequality}
	with \cref{lem:verification_NDC}.

	It remains to check
	\itemref{asm:abstract:3}.
	To this end, let sequences
	$\seq{t_k} \subset (0,\infty)$,
	$\seq{h_k} \subset X$ be given
	such that
	$t_k \to 0$, $h_k \weaklystar h \in X$
	and $\bar x + t_k h_k \in \dom G = \Uad$.
	Note that this implies that
	$\norm{t_k h_k}_{L_0^{\alpha,p}(\Omega)\dualspace} \le C t_k$
	and
	$\norm{t_k h_k}_{L^\infty(\Omega)} \le C$.
	We check that $\bar y \coloneqq S(\bar u)$
	and $y_k \coloneqq S(\bar u + t_k h_k)$
	satisfy
	$\norm{y_k - \bar y}_{L^\infty(\Omega)} \to 0$.
	Due to
	\begin{equation*}
		y_k - \bar y
		=
		\int_0^1 S'( \bar u + \xi t_k h_k) (t_k h_k) \d\xi,
	\end{equation*}
	it is sufficient to check that
	we can choose $\lambda \in (0,1)$
	such that
	\begin{equation*}
		\norm{
			S'(u) h
		}_{L^\infty(\Omega)}
		\le
		C
		\norm{h}_{L_0^{\alpha,p}(\Omega)\dualspace}^\lambda
		\norm{h}_{L^\infty(\Omega)}^{1-\lambda}
	\end{equation*}
	holds for all $u,h \in L^\infty(\Omega)$.
	For this, we can argue as in the proof of \cref{thm:stampacchia_meets_bessel},
	but instead of \eqref{eq:stampacchia_2},
	we use
	\begin{equation*}
		\int_\Omega h z_k \d\lambda^d
		\le
		\norm{h}_{L_0^{\alpha,p}(\Omega)\dualspace}^\lambda
		\norm{z_k}_{L_0^{\alpha,p}(\Omega)}^\lambda
		\norm{h}_{L^\infty(\Omega)}^{1-\lambda}
		\norm{z_k}_{L^1(\Omega)}^{1-\lambda}
		.
	\end{equation*}
	If we choose $\lambda > 0$ small enough,
	this leads to the desired inequality.
	Next, we introduce the adjoints
	$\varphi_k \coloneqq F'(\bar u + t_k h_k)$,
	which are the solutions to the adjoint equation \eqref{eq:adjoint}
	with $y = y_k$.
	By considering the adjoint equation,
	it is easy to check that
	$\norm{y_k - \bar y}_{L^\infty(\Omega)} \to 0$
	implies
	$\norm{\varphi_k - \bar \varphi}_{L^\infty(\Omega)} \to 0$.
	By arguing as in the proof of \cite[Lemma~2.6]{Casas2012:1},
	we obtain the inequality
	\begin{equation*}
		\norm{ S'(u) h - S'(\bar u) h}_{L^2(\Omega)}
		\le
		C \norm{S(u) - \bar y}_{L^\infty(\Omega)} \norm{S'(\bar u) h}_{L^2(\Omega)}
		\qquad
		\forall u \in \Uad, h \in L^\infty(\Omega).
	\end{equation*}
	Now, we can repeat the arguments in the proof of \cite[Lemma~2.7]{Casas2012:1}
	to
	obtain
	\begin{equation}
		\label{eq:estimate_second_deriv}
		\abs*{
			F''(u) h^2 - F''(\bar u) h^2
		}
		\le
		C
		\parens*{
			\norm{ S(u) - \bar y }_{L^\infty(\Omega)}
			+
			\norm{ \varphi_u - \bar \varphi }_{L^\infty(\Omega)}
		}
		\norm{S'(\bar u) h}_{L^2(\Omega)}^2
	\end{equation}
	for all $u \in \Uad$ and $h \in L^\infty(\Omega)$,
	where $\varphi_u$ is the adjoint state associated with $u$.

	Since $F$ is twice continuously differentiable on $L^\infty(\Omega)$,
	we get the existence of $\xi_k \in (0,1)$
	such that
	\begin{align*}
		F(\bar u + t_k h_k)
		&=
		F(\bar u)
		+
		t_k \dual{F'(\bar u)}{h_k}
		+
		\frac12 t_k^2 F''(\bar u + \xi_k t_k h_k) h_k^2
		\\&
		=
		F(\bar u)
		+
		t_k \dual{F'(\bar u)}{h_k}
		+
		\frac12 t_k^2 F''(\bar u ) h_k^2
		+
		\frac12 t_k^2 \bracks*{ F''(\bar u + \xi_k t_k h_k) - F''(\bar u)} h_k^2
		.
	\end{align*}
	By using the same arguments leading to $\norm{y_k - \bar y}_{L^\infty(\Omega)} \to 0$,
	we can check $\norm{S(\bar u + \xi_k t_k h_k) - \bar y}_{L^\infty(\Omega)} \to 0$.
	Further, $\norm{S'(\bar u) h_k}_{L^2(\Omega)}$
	is bounded due to \cref{thm:stampacchia_meets_bessel}.
	Together with \eqref{eq:estimate_second_deriv}
	this leads to
	\begin{equation*}
		\abs*{
			F''(\bar u + \xi_k t_k h_k) h_k^2 - F''(\bar u) h_k^2
		}
		\to 0.
	\end{equation*}
	By inserting this in the expansion of $F(\bar u + t_k h_k)$,
	this
	finishes the verification of \eqref{eq:taylor_F}.
\end{proof}

Finally,
we combine the abstract second-order result \cref{thm:second-order_condition}
with the precise structure of the weak-$\star$ second subderivative
from \cref{thm:equal_subderivatives}
to arrive at the main result of this section.
\begin{theorem}
	\label{thm:second_order_for_bang_bang}
	Let \cref{asm:everything} be satisfied.
	Then, the existence of $c,\varepsilon > 0$
	satisfying the
	quadratic growth
	\begin{equation}
		\label{eq:quad_growth_bang_bang}
		F(u)
		\ge
		F(\bar u)
		+
		\frac{c}{2} \norm{u - \bar u}_{L_0^{\alpha,p}(\Omega)\dualspace}^2
		\qquad
		\forall u \in \Uad, \norm{u - \bar u}_{L_0^{\alpha,p}(\Omega)\dualspace} \le \varepsilon
	\end{equation}
	is equivalent to the second-order condition
	\begin{equation}
		\label{eq:ssc_bang_bang}
		F''(\bar u) h^2
		+
		\int_{\set{\bar\varphi = 0} \cap \Omega} \frac{\abs{\nabla\bar\varphi}}{2} h^2 \d\HH^{d-1}
		>
		0
		\qquad
		\forall h \in L^2(\HH^{d-1}|_{\set{\bar\varphi = 0} \cap \Omega} ) \setminus \set{0}
		.
	\end{equation}
\end{theorem}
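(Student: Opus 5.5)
The plan is to obtain the equivalence directly from the abstract characterization in \cref{thm:second-order_condition}, applied with $Y = L_0^{\alpha,p}(\Omega)$, $X = L_0^{\alpha,p}(\Omega)\dualspace$, $G$ the indicator function of $\Uad$ (viewed as a subset of $X$) and $\bar x = \bar u$. By \cref{lem:verify_abstract}, \cref{asm:abstract} holds in this setting and the non-degeneracy condition \eqref{eq:NDC} is satisfied automatically. \cref{thm:second-order_condition} therefore reduces to the statement that quadratic growth \eqref{eq:QG} is equivalent to \eqref{eq:ssc} alone.

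First I would translate \eqref{eq:QG} into \eqref{eq:quad_growth_bang_bang}. Since $G = 0$ on $\Uad$ and $G = +\infty$ off $\Uad$, inequality \eqref{eq:QG} is trivial for $x \notin \Uad$. For $x = u \in \Uad$ it reads exactly $F(u) \ge F(\bar u) + \frac c2\norm{u-\bar u}_X^2$ whenever $\norm{u-\bar u}_X \le \varepsilon$, which is \eqref{eq:quad_growth_bang_bang}. Here $F$ is only required on $\dom G = \Uad$, consistent with \itemref{asm:abstract:2}.

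Next I would rewrite \eqref{eq:ssc}. We have $F'(\bar u) = \bar\varphi$. The sign convention needs care: \cref{def:second_subderivative} uses $-F'(\bar x)$. In \cref{thm:equal_subderivatives}, the subderivative is taken at $\bar u$ for $\bar\varphi$, and its proof uses the term $-t_k\dual{\bar\varphi}{h_k}$ together with $\bar\varphi h_k \le 0$. This corresponds to the normal-cone element $-\bar\varphi \in \partial G(\bar u)$, i.e., $G''_X(\bar u, -F'(\bar u); \cdot)$ in the convention of \eqref{eq:ssc}. I would check this identification explicitly, possibly noting that the paper's notation treats the two as the same object.

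Then \cref{thm:equal_subderivatives} applies. Its hypotheses hold: \cref{asm:standing} follows from \itemref{asm:everything:3}. The condition $\bar\varphi|_\Omega \in C_0(\Omega)$ follows from $\bar\varphi \in H_0^1(\Omega)\cap C^1$ on a Lipschitz domain, i.e., it has zero boundary trace and is continuous up to the boundary. This step needs a short argument, or is simply implicit in \itemref{asm:everything:3}. The parameter restrictions come from \itemref{asm:everything:4}, including $\alpha \le 1$, which the compactness lemma forces. The theorem gives $G''_X(\bar u,\bar\varphi;h) = \int \frac{\abs{\nabla\bar\varphi}}{2}h^2\d\HH^{d-1}$ for $h \in L^2(\HH^{d-1}|_{\set{\bar\varphi=0}\cap\Omega})$, and $+\infty$ otherwise.

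It remains to compare the two forms of the second-order condition. For $h \in X\setminus\set{0}$ outside the $L^2$ subspace, the left-hand side of \eqref{eq:ssc} is $+\infty > 0$; here I use that $F''(\bar u)h^2$ is finite on $X$ by the extension from \cref{thm:stampacchia_meets_bessel}. So \eqref{eq:ssc} is equivalent to positivity on $L^2(\HH^{d-1}|_{\set{\bar\varphi=0}\cap\Omega})\setminus\set{0}$, where $F''(\bar u)$ is evaluated via the continuous extension to $X$. This matches \eqref{eq:ssc_bang_bang}, provided the embedding of $L^2(\HH^{d-1}|_{\set{\bar\varphi=0}\cap\Omega})$ into $X$ through the trace operator is injective. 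Injectivity holds because a nonzero $L^2$ density yields a nonzero functional on $C_c^\infty(\Omega) \subset L_0^{\alpha,p}(\Omega)$, so nonzero $h$ on the surface gives nonzero $h$ in $X$.

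The main obstacle I expect is bookkeeping rather than new analysis. The delicate points are the sign convention in the subderivative, the verification that $\bar\varphi|_\Omega \in C_0(\Omega)$, and the consistency of the interpretation of $F''(\bar u)h^2$ for surface densities $h$, which must be read through the extension of $S'(\bar u)$ to $X$.
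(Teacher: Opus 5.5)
Your proposal is correct and is essentially the paper's argument. The paper obtains the theorem by combining \cref{thm:second-order_condition} with \cref{lem:verify_abstract} (which supplies \cref{asm:abstract} and the NDC) and the subderivative formula of \cref{thm:equal_subderivatives}; your bookkeeping items are exactly the details left implicit there. The sign question you flag is harmless: apply \cref{thm:equal_subderivatives} with whichever of $\pm F'(\bar u)$ satisfies $\bar u = \sign(\cdot)$ (for a point satisfying \eqref{eq:FONC} this is $-F'(\bar u)$, matching $G''_X(\bar u, -F'(\bar u); \cdot)$), and the resulting formula only involves $\set{\bar\varphi = 0}$ and $\abs{\nabla\bar\varphi}$, which are unchanged under $\bar\varphi \mapsto -\bar\varphi$.
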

It is interesting to note that the second-order condition \eqref{eq:ssc_bang_bang}
is independent of the chosen parameters $(\alpha,p)$.
Consequently,
if the quadratic growth \eqref{eq:quad_growth_bang_bang}
holds for one pair of parameters $(\alpha, p)$ satisfying
\itemref{asm:everything:4},
it is satisfied for all parameters
$(\alpha, p)$ satisfying
\itemref{asm:everything:4}.

Similarly,
the second-order condition \eqref{eq:ssc_bang_bang}
coincides with the second-order condition
proved in the setting $X = \MM(\Omega)$ for dimensions $d \le 3$
in
\cite{ChristofWachsmuth2017:1,WachsmuthWachsmuth2021}.
Consequently,
the quadratic growth \eqref{eq:quad_growth_bang_bang}
is equivalent to the quadratic growth in $L^1(\Omega)$
in case $d \le 3$.

Finally, we mention that \eqref{eq:ssc_bang_bang}
with ``$>$'' replaced by ``$\ge$'' is a necessary optimality condition.

\printbibliography

\end{document}